\documentclass[twoside,centertags]{amsart}

\usepackage[utf8]{inputenc}         % for utf8 encoding to avoid tex accents, for example
\usepackage[T1]{fontenc}            % for T1 out encoding
\usepackage{a4wide}                 % for a4 paper size
\usepackage{amsmath, amssymb}       % various ams packages
\usepackage{amsthm}                 % ams theorem styles

\usepackage{tikz}                   % graphics and diagrams
\usepackage{tikz-cd}                % commutative diagrams
\tikzcdset{ % for the style of arrow tips
arrow style=tikz
}

\usepackage{microtype}
\usepackage{color}                  % colors for our comments, for example, and for links
\usepackage[normalem]{ulem}         % long underline across lines 
\usepackage{enumitem}               % configure appearance of lists and references
\setlist[itemize]{leftmargin=*}
\setlist[enumerate]{leftmargin=*,label=\roman*),ref=\roman*)}
\newlist{subenumerate}{enumerate}{2}
\setlist[subenumerate]{leftmargin=*,label=\alph*),ref=\alph*)}

\usepackage{graphicx}               % check if used ??
\usepackage{stackrel}               % improved stacking of things

\usepackage{blkarray}               % for arrays and tables ?? should go

\usepackage{hyperref}

\usepackage{bbm}                    % blackboard style fonts 
\usepackage{stix}                   % just used for nsimeq 
\usepackage{eucal}                 
\usepackage{xypic}

\newcommand{\defi}[1]{\emph{#1}}                 % highlight or emphasize a word \emph{that is being defined} (not just any emphasis).

\newcommand{\pullbacksign}{%
	\hspace{-0.25ex}%
	\tikz[baseline=(pb.base)]{%
		\draw[line width=rule_thickness, line cap=round]%
		(0,0)%
		++ (-0.5ex,-0.5ex)%
		-- ++(1ex,0ex)%
		-- ++ (0ex,1ex);%
		\filldraw (-0.3ex,0.3ex) circle (0.3pt);%
		\node (pb) at (0,0) {\phantom{x}};%
	}%
}
\newcommand{\pushoutsign}{%
	\hspace{0.25ex}%
	\tikz[baseline=(po.base)]{%
		\draw[line width=rule_thickness, line cap=round]%
		(0,0)%
		++ (-0.5ex,-0.5ex)%
		-- ++(0ex,1ex)%
		-- ++ (1ex,0ex);%
		\filldraw (0.3ex,-0.3ex) circle (0.3pt);%
		\node (po) at (0,0) {\phantom{x}};%
	}%
}
\tikzcdset{
	pullback/.style = {dash, phantom, "\pullbacksign", start anchor=center, end anchor=center},
	pushout/.style = {dash, phantom, "\pushoutsign", start anchor=center, end anchor=center}
}

\newcommand{\Hom}{\operatorname{Hom}}            % Hom functor \warn{set Hom?}
\newcommand{\op}{^\mathrm{op}}                   % opposite category
\newcommand{\pop}[1]{^{\mathrm{op}_{#1}}}

\newcommand{\Ar}{\operatorname{Ar}}              % arrow cat

\newcommand{\Twar}{\operatorname{TwAr}}          % twisted arrow cat
\newcommand{\TwarD}{\operatorname{TwArD}}          % twisted arrow double cat
\newcommand{\ArD}{\operatorname{ArD}}           %arrow double category
\newcommand{\Sq}{\operatorname{Sq}}             % double category of squares

\newcommand{\id}{\mathrm{id}}                % identity
\newcommand{\const}{\mathrm{const}}          % constant function

\newcommand{\An}{\mathrm{An}}
\newcommand{\Set}{\mathrm{Set}}                  % sets
\newcommand{\Cat}{{\mathrm{Cat}_\infty}}                  % small infinity categories
\newcommand{\DCat}{\mathrm{DCat}_\infty}                  % small infinity double categories
\newcommand{\DSeg}{\mathrm{DSeg}} 			%double Segal spaces
\newcommand{\sSet}{\mathrm{s}\Set}
\newcommand{\ssSet}{\mathrm{ss}\Set}
\newcommand{\sAn}{\mathrm{s}\An}
\newcommand{\ssAn}{\mathrm{ss}\An}
\newcommand{\Seg}{\operatorname{Seg}}

\newcommand{\Span}{\operatorname{Span}}          % Span category
\newcommand{\Cob}{\mathrm{Cob}}                  % Cobordism category
\newcommand{\Fun}{\operatorname{Fun}}            % functor cat
\newcommand{\diag}{\mathrm{Diag}}                         % decoration for restriction alng diagonal
\newcommand{\oSun}{\oS^{\mathrm{un}}}

\newcommand{\K}{\operatorname K}                 % K-Theory 
\newcommand{\Q}{\operatorname{Q}}                % Q-construction
\newcommand{\oS}{\operatorname S}                % S-construction
\newcommand{\os}{\mathrm{s}}
\newcommand{\Stair}{\mathrm{Step}}
\newcommand{\cat}{\mathrm{cat}}
\newcommand{\Spine}{\mathrm{Spine}}
\newcommand{\SpineD}{\mathrm{SpineArD}}
  
\newcommand{\cart}{\mathrm{cart}}
\newcommand{\cocart}{\mathrm{cocart}}
\newcommand{\co}{\mathrm{co}}

\newcommand{\C}{\mathcal C}                
\newcommand{\D}{\mathcal{D}}               

\newtheoremstyle{thms}
	{}{}{\itshape}{}{\bfseries }{}{ }
	{\thmname{#1} \thmnumber{#2}. \thmnote{\bfseries{(#3)}}}

\newtheoremstyle{defs}
	{}{}{\normalfont}{}{\bfseries }{}{ }
	{\thmname{#1} \thmnumber{#2}. \thmnote{\bfseries{(#3)}}}

\newtheoremstyle{rmk}
	{}{}{\normalfont}{}{\itshape }{}{ }
        {}

\theoremstyle{thms}
\newtheorem{proposition}{Proposition}[section]
\newtheorem{theorem}[proposition]{Theorem}
\newtheorem{lemma}[proposition]{Lemma}
\newtheorem{corollary}[proposition]{Corollary}

\theoremstyle{defs}

\newtheorem{remark}[proposition]{Remark}

\theoremstyle{defs2}

\theoremstyle{rmk}

\author{George Raptis}
\address{Department of Mathematics, Aristotle University of Thessaloniki, 541 24 Thessaloniki, Greece}
\email{raptisg@math.auth.gr}

\author{Wolfgang Steimle}
\address{Institut f\"ur Mathematik, Universit\"at Augsburg, Germany}
\email{wolfgang.steimle@math.uni-augsburg.de}

\title{The span-squares adjunction}

\date{\today}
\begin{document}

\begin{abstract}
We show a universal property of the span $\infty$-category that yields a description of functors defined on this category. For this, we view  the span construction as a functor from double $\infty$-categories to $\infty$-categories, and show that this functor admits a right adjoint defined by the double $\infty$-categories of squares. Using this adjunction, we obtain new proofs of the equivalences between different models of algebraic $K$-theory, given by the $\Q$-, the $\oS$-, the cobordism model, and the squares construction. 
\end{abstract}
\maketitle

\section{Introduction}

The span category $\Span(\C)$ of an ($\infty$-)category $\C$ is an important and useful device for expressing combined covariant and contravariant functorialities that are related by base-change formulas. Such situations arise notably in the well-studied context of 6-functor formalisms (where the contravariant functor is given by pull-back of sheaves and the covariant functor extends proper pushforward), see e.g.\ \cite{CCL2,Man} for recent treatments. They arise also in equivariant homotopy theory (where the contravariant functoriality is given by transfer and the base-change formula expresses the Mackey double coset formula), see e.g.\ \cite{BDA, Bar3}, 
and in the context of higher semiadditivity (where the two functorialities combine to express norm maps), see e.g.\ \cite{CCL}.

In such situations, a functor $\Span(\C)\to \D$ into another $\infty$-category $\D$ should exactly integrate these combined covariant and contravariant data in a uniform and orderly way. As usual in higher category theory, the compatibilities between covariant and contravariant structures form a system of higher coherences. Thus, it becomes a complex problem to formulate in a useful way the precise data required for the definition of a functor out of the span $\infty$-category. Motivated by a theorem of Liu--Zheng \cite[Theorem 1.4.26]{LZ}, the first main goal of this note is to establish a simple such description, formulated in terms of an adjunction (\emph{span-squares adjunction}) between $\infty$-categories and \emph{double $\infty$-categories}. For this purpose, it is crucial to regard the span category as an $(\infty,1)$-category that is functorially associated to a double $\infty$-category (or even a bisimplicial space); this is not to be confused with enhancements of the span category itself to an $(\infty,2)$-category.

Span ($\infty$-)categories and their geometric realizations also arise in algebraic $K$-theory (via Quillen's $\Q$-construction) \cite{Qui, Bar}. Our double-categorical setup has the pleasant feature that one may take opposites in either of the two directions alone. It turns out that this procedure interchanges the $\Q$-construction with (an unpointed, double- and $\infty$-categorical extension of) the Segal-Waldhausen $\os$- and $\oS$-construction \cite{Wal}. The second main goal of this note is to 
draw conclusions of the span-squares adjunction concerning different models of algebraic $K$-theory. Specifically, we obtain natural equivalences between the geometric realizations 
\[ \vert \Q(\D)\vert \simeq \vert \oS(\D)\vert \simeq \vert \D\vert\]
for any double $\infty$-category $\D$. This gives new proofs and generalizations of the equivalences between various different models of algebraic $K$-theory \cite{Qui, Wal, Bar, RS, CKMZ}. The right-hand term can be considered as a double $\infty$-categorical version of squares $K$-theory (see \cite{CKMZ}); this version of algebraic $K$-theory has been considered for the $K$-theory of varieties \cite{Cam} and for the scissors congruence $K$-theory of manifolds \cite{HMMRS, MRS}. 

\medskip 

Let us now explain the span-squares adjunction in more detail. We denote by $\Cat$ the $\infty$-category of small $\infty$-categories and by $\DSeg$ the $\infty$-category of (what we shall call) double $\infty$-categories and by which we mean explicitly double Segal spaces; see Remark \ref{rem:completeness} for a discussion of completeness conditions. We can informally describe the span category\footnote{For the sake of brevity, we will omit the prefix $\infty=(\infty,1)$ from now on.} of a double category $\D$ as follows (see Section \ref{sec:preliminaries} for precise definitions): an object of $\Span(\D)$ is an object ( = $(0,0)$-simplex) of $\D$, and morphisms in the span category are generated by spans of the form
\[ d_0 \xleftarrow{f} d_{01} \xrightarrow{g} d_1\]
where $f$ is a horizontal morphism ( = $(0,1)$-simplex) of $\D$ and $g$ is a vertical morphism ( = $(1,0)$-simplex), to be thought of as morphisms from $d_0$ to $d_1$. The commutative triangles in $\Span(\D)$ are generated by the diagrams of shape
\[
\begin{tikzcd}
    && d_{02} \ar[ld]\ar[rd]\\
    & d_{01} \ar[ld] \ar[rd] && d_{12} \ar[ld]\ar[rd]\\
    d_0 && d_1&& d_2
\end{tikzcd}
\]
where the left-pointing maps are horizontal morphisms of $\D$, the right-pointing maps are vertical morphisms of $\D$, and the upper square is a square in the double-categorical structure of $\D$. A straightforward extension to higher coherence data provides a functor $\Span \colon \DSeg \to \Cat$. The usual span category of a category and known variants thereof, such as for (adequate) triples, are special cases of this construction on a suitable double category.

Conversely, we have a functor which sends a category to its \emph{double category of squares}, defined via
\[ \Sq\colon \Cat \to \DSeg, \quad  \Sq(\C)_{m,n}:=\Hom_\Cat([m]\times [n], \C).\]
Thus, both the horizontal and the vertical morphisms of $\Sq(\C)$ are  given by the morphisms of $\C$, and the squares of the double category $\Sq(\C)$ are just the commutative squares in $\C$. Then we have:

\begin{theorem}[The span-squares adjunction]\label{span-constr}
The functor $\Span\colon \DSeg \to \Cat$ is left adjoint to the functor $\C\mapsto \Sq(\C)\pop2$. Moreover, $\Sq(-)\pop2 \colon \Cat \to \DSeg$ is fully faithful.
\end{theorem}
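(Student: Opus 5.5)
The plan is to set up $\Span$ as a left Kan extension and reduce the adjunction to a computation on representables. Regard $\DSeg$ as a full (localizing) subcategory of bisimplicial spaces $\ssAn=\Fun(\Delta\op\times\Delta\op,\An)$, and first define $\Span$ on all of $\ssAn$. We take the colimit-preserving extension of a cosimplicial-bisimplicial object
\[ \Delta\times\Delta \to \Cat,\qquad ([m],[n])\mapsto \Twar([m]\star[n])^{\mathrm{rel}}, \]
or more precisely of the functor sending the bi-representable $\Delta^{m,n}$ to a span category $\Span(\Delta^{m,n})\in\Cat$, read off from the informal description. Concretely, $\Span(\Delta^{m,n})$ is the category freely generated by the spans and triangles coming from the grid $[m]\times[n]$, where horizontal maps go backwards and vertical maps go forwards. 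By the standard nerve/realization argument, this left Kan extension $\ssAn\to\Cat$ has a right adjoint $R$ with
\[ R(\C)_{m,n}=\Hom_\Cat(\Span(\Delta^{m,n}),\C). \]
Restricting to $\DSeg$ gives the left adjoint to $R$, provided $R(\C)$ lands in $\DSeg$. So the first key step is to identify $\Span(\Delta^{m,n})\simeq [m]\op\times[n]$ (equivalently $[m]\times[n]$ with the first direction reversed), naturally in $([m],[n])\in\Delta\times\Delta$. Granting this,
\[ R(\C)_{m,n}\simeq \Hom_\Cat([m]\op\times[n],\C)\simeq \Sq(\C)\pop2_{m,n}, \]
which is manifestly a double Segal space since $[m]\times[n]$ is built from spine pushouts in each variable and $\Hom_\Cat(-,\C)$ turns these into pullbacks.

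The identification of $\Span(\Delta^{m,n})$ is the main obstacle. I would handle it by first reducing to the Segal-generating pieces. Because $\Span$ is defined to invert the double Segal maps (it factors through the localization), it suffices to compute on $\Delta^{1,0},\Delta^{0,1},\Delta^{1,1}$ and check compatibility with the spine decompositions. On $\Delta^{1,0}$ and $\Delta^{0,1}$ we get $[1]$ (in the appropriate orientation). On $\Delta^{1,1}$ the generating data are a square $d_{01}\to\dots$ glued as in the triangle picture of the introduction. The required claim is then that the composite of the two spans through the upper square yields precisely the commutative square $[1]\op\times[1]$; that is, the square relation in $\D$ becomes commutativity of the square in the span category. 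One then shows that the canonical comparison map $\Span(\Delta^{m,n})\to[m]\op\times[n]$ is an equivalence. To do this, present $[m]\op\times[n]$ as the colimit in $\Cat$ of its $1\times1$ grid squares; this is a standard pushout decomposition of products of simplices, e.g.\ via the shuffle/Segal-type decomposition, and can be checked after applying $\Hom(-,\C)$ for all $\C$. Should the informal definition of $\Span$ in Section~\ref{sec:preliminaries} instead be via twisted arrows, the comparison is the known equivalence of the relevant twisted-arrow category of $[m]\times[n]$ with a localization to $[m]\op\times[n]$.

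Finally, for full faithfulness of $\Sq(-)\pop2$ it suffices to show that the counit $\Span(\Sq(\C)\pop2)\to\C$ is an equivalence. Both sides preserve colimits in the relevant sense, and every $\C$ is a colimit of simplices $[k]$ (as a complete Segal space, a colimit of $\Delta^k$ in $\Cat$). So I would reduce to checking the counit on $\C=[k]$, or directly compare mapping spaces. Alternatively and more cleanly, compose with the fully faithful "vertical" embedding $\iota\colon\Cat\to\DSeg$, $\C\mapsto(\C$ with trivial horizontal direction$)$. One checks $\Span\circ\iota\simeq\id$ from the computation $\Span(\Delta^{0,n})=[n]$, and checks that the inclusion $\iota\C\to\Sq(\C)\pop2$ is sent by $\Span$ to an equivalence. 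The latter holds because each span $d_0\leftarrow d_{01}\to d_1$ in $\Span(\Sq(\C)\pop2)$ is identified, via the $\Delta^{1,1}$ relation with a degenerate square, with the composite vertical map; this gives essential surjectivity on morphisms and higher simplices levelwise. The resulting triangle identities then show the counit is an equivalence.
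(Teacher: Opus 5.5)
Your proposal has a genuine gap at the first step. You \emph{define} $\Span$ on $\ssAn$ as the colimit-preserving extension of its values on the representables $[m]\otimes[n]$. But $\Span$ is already defined, as $\cat\circ\Q$ with $\Q(X)_n=\Hom_{\ssAn}(\TwarD[n],X)$.

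Since $\TwarD[n]$ is not representable, $\Q$ does not commute with colimits. There is no a priori reason that $\cat\circ\Q$ does, or that it inverts double Segal maps. So ``$\Span$ is defined to invert the double Segal maps'' assumes what has to be proved.

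On representables everything is easy. Directly from the definition, $\Q([m]\otimes[n])$ is the nerve of $[m]\times[n]\op$. (In the paper's conventions the opposite sits on the second, horizontal, factor, not the first as you wrote; otherwise you would get $\Sq(\C)\pop1$.) Hence your left Kan extension is just $\diag((-)\pop2)$, and the adjunction you derive is Proposition \ref{prop:basic_adjunction} transported along $(-)\pop2$.

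The actual content of the theorem is that this colimit-preserving functor agrees with $\cat\circ\Q$ on all of $\DSeg$. That is Theorem \ref{S-constr}: the map $w_X\colon\delta(X)\to\oS(X)$ is a categorical equivalence. Your proposal contains no argument for it.

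What is missing is a computation on the objects $\ArD[n]$ that \emph{corepresent} $\oS_n$, rather than on the $[m]\otimes[n]$ that govern the colimit-preserving side. The paper proves two such computations:
\begin{itemize}
\item $\diag(\ArD[n])\simeq\Ar[n]$ (Lemma \ref{lem:diag_ArD}), via the spine decomposition of Lemma \ref{lem:decomposition_ArD} and a retraction through $[n]\times[n]$. This is genuinely non-formal, since $\delta(\ArD[n])$ is not a Segal space.
\item $\Stair(\ArD[n])\simeq\Ar[n]$ (Lemma \ref{lem:diagonal_of_ArD}).
\end{itemize}
It then uses the Yoneda lemma to show that the spaces of natural transformations $\Stair\to\diag$, $\diag\to\diag$ and $\Stair\to\Stair$ are contractible. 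This produces an inverse to $\cat(w)$.

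The same gap undermines your full faithfulness argument.
\begin{itemize}
\item Reducing the counit to $\C=[k]$ fails, because $\Sq$ does not preserve colimits.
\item In the $\iota$-variant, ``surjectivity on morphisms'' does not show that $\Span(\iota\C)\to\Span(\Sq(\C)\pop2)=\cat\,\oS(\Sq(\C))$ is an equivalence. The simplicial space $\oS(\Sq(\C))$ is not even a Segal space: the unique-extension condition of Corollary \ref{Segal_condition_for_S} fails, since a commutative square is not determined by a composable pair of its edges. So its categorical realization cannot be read off levelwise.
\end{itemize}
In the paper, full faithfulness is immediate once Theorem \ref{S-constr} is available: $\Span(\Sq(\C)\pop2)\simeq\diag(\Sq(\C))\simeq\C$, by the explicit homotopy in Proposition \ref{prop:basic_adjunction}.
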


Here, $(-)\pop2$ denotes the opposite in the second/horizontal category direction. Through this adjunction, a functor $f\colon \Span(\D)\to \C$ is uniquely determined by an associated functor of double categories $\D \to \Sq(\C)\pop2$, that is, a structure that consists of a functor $f_v \colon \D_v \to \C$ from the vertical category of $\D$ to $\C$, a (contravariant) functor $f_h\colon \D_h\op\to \C$ from the horizontal category of $\D$, and coherent choices of commutative squares in $\C$ that are functorially associated (both vertically and horizontally) to the squares in $\D$. This is clearly related to the type of structure that emerges in the context of the six functors, however, we will not pursue this relation to formalisms or constructions of such structures in this note.

The span category can be defined similarly for arbitrary bisimplicial spaces. Our proof of Theorem \ref{span-constr} is based on an identification of the span category in this more general setting. Specifically, for any bisimplicial space $X \in \ssAn$, there is a canonical comparison map of simplicial spaces
$$w_X \colon \delta(X) \to \Q(X\pop2) \cong \oS(X)$$
where $\delta(X) \in \sAn$ denotes the diagonal simplicial space of $X$, and $\Q(X) \in \sAn$ (the \defi{Q-construction} of $X$) is the simplicial space whose associated category defines the span category of $X$. We show in Theorem \ref{S-constr} that $w_X$ induces an equivalence between the associated categories. Thus, the categorical realization $\diag(X) \in \Cat$ of $\delta(X)$ (the \emph{diagonal category} of $X$) agrees with the span category of $X\pop2$. This allows us to interpret the span category as a concrete, often more computable, model for the diagonal category.  

In the course of the proof of Theorem \ref{span-constr}, we will also show that the functor $\Span$ has a contractible space of endomorphisms.

\begin{remark}\label{rem:completeness}
Theorem \ref{span-constr} remains true after passing from  $\DSeg$ to its full subcategory spanned by bicomplete double Segal spaces, and also after passing to double Segal spaces that are complete in only the first (or second) direction. This claim is a formal consequence of the fact that the functor $\Sq$ takes values in bicomplete double Segal spaces. We mention that, conversely, the span-squares adjunction extends to an adjunction between $\Cat$ and the category $\ssAn$ of bisimplicial spaces and, in fact, to an adjunction between the category $\Seg$ of Segal spaces and the category $\ssAn$, see Remark \ref{Seg-rem3} below. 
\end{remark}

\subsection*{Relation to other work}
Our Theorem \ref{S-constr} is closely related to and, in fact, extends a result of Liu--Zheng \cite[Theorem 1.4.26]{LZ}; we present here a different conceptual approach and offer an alternative and more categorical perspective. Apart from this, the literature contains a number of universal properties of span categories, which are however \emph{not} directly related to our result. Specifically, \cite{HHLN} gives a description of maps \emph{into} span categories in terms of an adjunction, while a number of other works (e.g., \cite{GR, Mac, Ste, CCL2}, \cite{Hau}) treat the span category crucially as an $(\infty, 2)$ or even $(\infty,n)$-category. In a different direction, for certain specific input ($\infty$-)categories $\C$, the span $(\infty,1)$-category $\Span(\C)$ also has a universal property in the context of ambidexterity: most prominently, the span category $\Span(\mathrm{Fin})$ of finite sets is the free semi-additive $\infty$-category on one object \cite{Harpaz}. Our double-categorical version of the $\oS$-construction has also been considered in \cite{Juran} in the context of orthogonal factorization systems (see Remark \ref{rem:juran}). 

\subsection*{Organization of the paper}
The main definitions are given in Section \ref{sec:preliminaries}, along with a discussion of their basic properties. It ends with a reformulation of our main Theorem (Theorem \ref{S-constr}), the proof of which occupies the next two Sections \ref{sec:arrow_double} and \ref{proof-main-thm}. The final Section \ref{double_vs_single_span} is devoted to the applications in algebraic $K$-theory. 

\subsection*{Notations and conventions}
We denote by $\An$ the category of spaces ( = animae) and consider it as a full subcategory of $\Cat$, the category of (small) categories. We also identify the latter category, through the Rezk nerve, with a full subcategory of the category $\sAn=\Fun(\Delta\op, \An)$ of simplicial spaces. For example, we will usually identify $[n] \in \Cat$ with $\Delta^n \in \sAn$ in the notation. We refer to the left adjoints 
\[ \cat\colon \sAn \to \Cat \quad \mathrm{and} \quad \vert - \vert \colon \sAn \to \An \]
of the inclusions as \defi{categorical} and \defi{geometric realization} respectively; the latter functor is just the colimit over $\Delta\op$ and extends the geometric realization of categories. A map of simplicial spaces is called \defi{categorical equivalence} if it becomes an equivalence after categorical realization (and therefore, after geometric realization). 

As already explained above, we consider the category of (what we call here) double categories $\DSeg$ as the full subcategory of the category $\ssAn:=\Fun(\Delta\op \times \Delta\op, \An)$, consisting of the bisimplicial spaces that satisfy the Segal condition pointwise in both directions 
(without any additional completeness assumptions). We refer to the left adjoint $\ssAn \to \DSeg$ of the inclusion $\DSeg \subset \ssAn$ as \defi{double-categorical realization} functor. $\DSeg$ is also a full reflective subcategory of the full reflective subcategory of $\ssAn$ that is spanned by the bisimplicial spaces which satisfy the Segal condition pointwise in only the first (or second) direction. Hence the double-categorical realization functor factors through the reflection into either of these two intermediate full subcategories. We write $[n] \otimes [m]$ for the double category that is represented by $([n], [m])\in \Delta\times \Delta$. 

Following the standard convention from matrix theory, we will refer to the first (resp., second) simplicial direction in a bisimplicial space as the \defi{vertical} (resp., \defi{horizontal}) one and draw simplices in diagrams accordingly. 

\subsection*{Acknowledgements} We thank Bastiaan Cnossen and Fabian Hebestreit for helpful comments. G.R. was supported by the Hellenic Foundation for Research and Innovation (H.F.R.I.) under the ``3rd Call for H.F.R.I.’s Research Projects to Support Faculty Members \& Researchers'' (Project Number: 25480).

%%%%%%%%%%%%%%%%%%%%%%%%%%%%%%%%%%%%%%%%%%%%%%%%%%%%%%%%%%
%%%%%%%%%%%%%%%%%%%%%%%%%%%%%%%%%%%%%%%%%%%%%%%%%%%%%%%%%%%%%

\section{Models of the diagonal category functor}\label{sec:preliminaries}

\subsection{The diagonal category}

The \defi{diagonal simplicial space} $\delta(X)$ of a bisimplicial space $X\in \ssAn$ is defined as the restriction of $X$ along the diagonal functor $\Delta\op \to \Delta\op \times \Delta\op$, and the \defi{diagonal category} $\diag(X)$ is its categorical realization:
\[\diag\colon \ssAn \xrightarrow{\delta} \sAn \xrightarrow{\cat} \Cat.\]
Informally, say for $X=\D$ a double category, the diagonal category $\diag(\D)$ is the category whose morphisms are generated by the squares in $\D$. We will frequently use the computation 
\[\diag([m]\otimes [n])= [m]\times [n]\]
which stems from the equivalence of simplicial spaces $\delta([m]\otimes [n])=[m]\times[n]$.

\begin{proposition}\label{prop:basic_adjunction}
The diagonal category functor is part of an adjunction 
\begin{equation} \label{Diag-adjunction} \tag{$\diag$}
\diag \colon \DSeg \rightleftarrows \Cat \colon \Sq
\end{equation}
where $\diag$ is the left adjoint. Moreover, the counit transformation 
$\diag\Sq(\C)\to \C$ is an equivalence (so that $\Sq$ identifies $\Cat$ with a Bousfield localization of $\DSeg$). 
\end{proposition}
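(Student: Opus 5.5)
The adjunction is essentially formal, so I will get it from the left Kan extension/nerve formalism. On representables, $\diag([m]\otimes[n]) = [m]\times[n]$. By definition,
\[\Sq(\C)_{m,n}=\Hom_\Cat([m]\times[n],\C)=\Hom_\Cat(\diag([m]\otimes[n]),\C).\]
So $\Sq$ is the ``nerve'' associated to the functor $\Delta\times\Delta\to\Cat$, $([m],[n])\mapsto[m]\times[n]$.

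First I will show that $\diag\colon\ssAn\to\Cat$ is left adjoint to $\Sq\colon\Cat\to\ssAn$. Both $\delta$ (restriction along the diagonal) and $\cat$ preserve colimits, so $\diag$ is colimit-preserving on $\ssAn=\Fun(\Delta\op\times\Delta\op,\An)$. Hence $\diag$ is the left Kan extension of its restriction to representables along the Yoneda embedding. Its right adjoint is then $\C\mapsto\Hom_\Cat(\diag([m]\otimes[n]),\C)$, which is $\Sq(\C)$.

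Alternatively, and more concretely: $\delta$ has right adjoint the right Kan extension $\delta_*$, and $\cat$ has right adjoint the inclusion $\Cat\subset\sAn$. For a category $\C$, viewed as a simplicial space,
\[(\delta_*\C)_{m,n}=\Hom_{\sAn}(\delta([m]\otimes[n]),\C)=\Hom_{\sAn}([m]\times[n],\C).\]
Here I use that $\delta$ of the representable is the product of representables, a fact recorded above. Since $[m]\times[n]$ is a category, this equals $\Hom_\Cat([m]\times[n],\C)$.

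Next I will check that $\Sq(\C)$ lies in $\DSeg$. Fixing $m$, the Segal condition in $n$ asks that
\[\Hom([m]\times[n],\C)\to\Hom([m]\times[1],\C)\times_{\Hom([m]\times[0],\C)}\cdots\]
is an equivalence. This holds because $[m]\times-$ preserves the relevant pushouts $[1]\sqcup_{[0]}\cdots\sqcup_{[0]}[1]\to[n]$ in $\Cat$, since $\Cat$ is cartesian closed. The other direction is symmetric. Since $\DSeg\subset\ssAn$ is full, the adjunction restricts to \eqref{Diag-adjunction}.

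For the counit, I will show that $\Sq$ is fully faithful. The plan is to compute $\delta\Sq(\C)$: its $n$-simplices are $\Hom([n]\times[n],\C)$. The counit $\cat\delta\Sq(\C)\to\C$ is induced by the map of simplicial spaces $\delta\Sq(\C)\to\C$ given by restriction along the diagonal $[n]\to[n]\times[n]$. I will show this map is a categorical equivalence. It has a section $s\colon\C\to\delta\Sq(\C)$, given by restriction along the projection $[n]\times[n]\to[n]$ onto the first factor, with $rs=\id$.

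It remains to show $sr\simeq\id$ after categorical realization. I will build a simplicial homotopy $\delta\Sq(\C)\times\Delta^1\to\delta\Sq(\C)$ from a natural transformation of functors $[n]\to[n]\times[n]$ between the diagonal and $(\id,\mathrm{const})$-type maps. Concretely, $\Delta^1$-parametrized families of maps $[n]\times[n]\to[n]\times[n]$, natural in $[n]$, can be built from the order relations $\min(i,j)\le i$ or $i\le\max(i,j)$. For example, I can use the homotopy through $(i,j)\mapsto(i,\max(i,j))$ connecting $\id$ to a map factoring through $\mathrm{pr}_1$ and the diagonal. A simplicial homotopy parametrized by $\Delta^1$ becomes a natural transformation after $\cat$, and composing such transformations shows $sr$ and $\id$ are connected. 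Combined with $rs=\id$, this is enough after applying $\cat$, since $[1]$-homotopies give equivalences in $\Cat$ only up to invertibility. The cleaner route is to note that $r$ admits the section with a natural transformation in one direction, and then show $r$ is fully faithful and essentially surjective.

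This last step is the main obstacle: making the simplicial homotopy argument actually yield an equivalence rather than merely an adjunction. If that fails, I will fall back on a Yoneda argument. I will show that $\Hom_\Cat(\diag\Sq(\C),\mathcal E)\simeq\Hom_\DSeg(\Sq\C,\Sq\mathcal E)$ agrees with $\Hom_\Cat(\C,\mathcal E)$, by reducing to $\C=[k]$ and using that $\Sq$ preserves limits.
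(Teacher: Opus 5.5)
The adjunction part of your proposal is fine and matches the paper, which also obtains it from $\delta\dashv\Sq$ on bisimplicial spaces and then restricts. For the counit you also have the paper's setup: restriction $r=d^*$ along the diagonal, a section $s$ given by precomposition with a natural map $[n]\times[n]\to[n]$, and simplicial homotopies built from transformations of self-maps of $[n]\times[n]$ that are natural in $[n]$. You use $\mathrm{pr}_1$; the paper uses $\min$. With $\mathrm{pr}_1$ you need a zigzag $\id\Rightarrow f\Leftarrow d\circ\mathrm{pr}_1$ with $f(i,j)=(i,\max(i,j))$. Note that $f$ itself does not factor through $\mathrm{pr}_1$, contrary to what you wrote. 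With $\min$ there is a single transformation $d\circ\min\Rightarrow\id$, and $\min\circ d=\id$ still holds. Either choice works.

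The gap is the step you flag as the main obstacle and leave open. You never show that the transformations obtained after applying $\cat$ are invertible. Without that, $\cat(r)\cat(s)=\id$ plus a zigzag of transformations does not make $\cat(r)$ an equivalence.

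The missing observation is short:
\begin{itemize}
\item For $n=0$, all the transformations $\id\Rightarrow f\Leftarrow d\circ\mathrm{pr}_1$ (or $d\circ\min\Rightarrow\id$) are identities of $[0]\times[0]$.
\item Hence the homotopy $\delta\Sq(\C)\times\Delta^1\to\delta\Sq(\C)$ sends $(s_0x,\id_{[1]})$, for a $0$-simplex $x$, to the degenerate $1$-simplex $s_0x$.
\item Since $\cat$ preserves products and every object of $\cat(Y)$ comes from $Y_0$, every component of the induced transformation of endofunctors of $\diag\Sq(\C)$ is an identity.
\item A pointwise invertible transformation in $\Cat$ is an equivalence, so $\cat(s)\cat(r)\simeq\id$ and $\cat(r)$ is an equivalence.
\end{itemize}
This is exactly how the paper concludes.

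Your fallback would not repair the gap. Reducing to $\C=[k]$ requires $\C\mapsto\Hom_{\DSeg}(\Sq\C,\Sq\mathcal E)$ to turn colimits in $\C$ into limits. But $\Sq$ is a right adjoint and does not preserve colimits; its preservation of limits only concerns the variable $\mathcal E$. Moreover, the case $\C=[k]$ is just $\diag\Sq[k]\simeq[k]$, which needs the same homotopy argument.
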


\newcommand{\Fin}{\mathrm{Fin}}

\begin{remark}
Both functors in this adjunction preserve products and so we also obtain an induced adjunction between symmetric monoidal categories and symmetric monoidal double categories. 
\end{remark}

\begin{proof}
  We first observe that the functor $\delta$ is part of an adjunction 
$$\delta \colon \ssAn \rightleftarrows \sAn \colon \Sq$$
where the right adjoint $\Sq$ is the obvious extension of $\Sq\colon \Cat\to \DSeg$, defined by the same rule for any $X \in \sAn$,
\[ \Sq(X) \colon \Delta\op \times \Delta\op \to \An, \ \Sq(X)_{m,n}:=\Hom_{\sAn}([m]\times [n], X).\]
As $\Sq$ sends $\Cat$ to $\DSeg$, we obtain an induced adjunction between $\Cat$ and $\DSeg$, as claimed. 

For the second statement, we note that the counit transformation of the adjunction $(\delta, \Sq)$ is given on $n$-simplices by the map 
\[d^*\colon (\delta \Sq(X))_n = \Hom_{\sAn}([n]\times [n], X) \to \Hom_{\sAn}([n], X) = X_n\]
that restricts along the diagonal $d\colon [n]\to [n]\times [n]$. A map in the other direction is given by precomposition with the minimum function $m\colon [n]\times [n]\to [n]$, thought of as a natural transformation of functors $\Delta\to \sAn$. Then $d^* \circ m^* = \id$ since $m\circ d=\id$. On the other hand, the natural transformation $H\colon d\circ m \rightarrow \id$ of endofunctors on $[n]\times [n]$, thought of as a natural transformation $[n]\times [n]\times [1]\to [n]\times [n]$ of functors $\Delta\to \sAn$, gives rise to a natural transformation $m^*\circ d^* \rightarrow\id_{\delta\Sq(X)}$. Explicitly, this is defined by the simplicial map $\delta \Sq(X) \times [1] \to \delta \Sq(X)$ that is given on $n$-simplices by the rule
 \[( \sigma\colon [n]\times [n]\to X,\; \alpha\colon [n]\to [1]) \mapsto ([n]\times [n]\xrightarrow{(\id, \alpha\circ m)} [n]\times [n]\times [1]\xrightarrow{H} [n]\times [n]\xrightarrow \sigma X).\]
After applying categorical realization (which commutes with products), and evaluating at $X=\C\in \Cat$, we obtain functors
\[ \cat(d^*)\colon \diag\Sq(\C) \rightleftarrows \C\colon \cat(m^*)\]
with $\cat(d^*)\circ \cat(m^*)=\id$, and a natural transformation from $\cat(m^*)\circ \cat(d^*)$ to the identity. It remains to check that the latter natural transformation is an equivalence at each object of $\diag \Sq(\C)$. But this is clear from the definition since the natural transformation $H$ is stationary for $n=0$.
% , i.e. each object of $\C$. By naturality in $\C$, it suffices to consider the case $\C=[0]$, in which case $\diag \Sq[0]=[0]$ is a groupoid, so the claim follows.
\end{proof}

\begin{remark} \label{Segal-rem1}
Proposition \ref{prop:basic_adjunction} remains true when $\Cat$ is replaced by $\Seg$, that is, the functor $\Sq \colon \Seg \to \DSeg$ is fully faithful and admits a left adjoint induced by $\delta$. The proof is essentially the same in this case, too; the last argument of the proof is also valid for Segal spaces by \cite[Proposition 2.21, Remark 2.22]{Ara}. 
\end{remark}

\begin{remark} 
The essential image of $\Sq$, as part of a Bousfield localization, is given by all double Segal spaces $\D$ which are local for the class of $\diag$-equivalences, i.e., the maps that are inverted by $\diag$, but it is also possible to identify smaller generating subsets of $\diag$-equivalences in various ways. For example, consider the set of $\diag$-equivalences:
$$S_1 = \{ [m] \otimes [n] \to \Sq([m] \times [n]) \ | \ m, n \geq 0\},$$ 
given by unit maps of the adjunction $(\diag, \Sq)$, and 
$$S_2 = \{[m] \times [n]_{|v=\id} \to \Sq([m] \times [n]) \ | \ m,n \geq 0\},$$ 
given also by unit maps of the adjunction $(\diag, \Sq)$, where $[m] \times [n]_{| v=\id}$ denotes the horizontal category $[m] \times [n]$ with identities vertically. Then a bicomplete double Segal space $\D$ is in the essential image of $\Sq$ if and only if it is $(S_1 \cup S_2)$-local. Indeed, in this case, the resulting equivalences 
\[\Hom_{\DSeg}([\bullet] \otimes [\ast], \D) \leftarrow \Hom_{\DSeg}(\Sq([\bullet]\times [\ast]), \D) \rightarrow  \Hom_{\DSeg}([\bullet] \times [\ast]_{|v=\id}, \D)=\Hom_{\Cat}([\bullet]\times [\ast], \D_h)\]
identify $\D$ with $\Sq(\D_h)$, where $\D_h$ denotes the horizontal subcategory of $\D$.
\end{remark}

Despite its foundational importance, the diagonal category is difficult to access in general directly from its definition; the technical reason is that the simplicial space $\delta(X)$ rarely is a Segal space, even if $X$ is a double category (``squares do not compose diagonally''). Our main result can be viewed as providing a description of $\diag(X)$ in terms of another construction that is more familiar and often more accessible. Namely, in view of Proposition \ref{prop:basic_adjunction}, our main result may be reformulated as stating a natural equivalence between $\diag(X)$ and the span category $\Span(X\pop2)$ to be defined below.

\subsection{The span category} 
The usual span category is the categorical realization of the $\Q$-construction \cite{Qui, Bar}, \cite{HHLN}, which is defined in turn in terms of the twisted arrow category. Our double-categorical variant is defined by replacing the twisted arrow category by a double-categorical variant thereof. 

For any $\C \in \Cat$, the \defi{twisted arrow double category} $\TwarD(\C)$ is the double category defined by 
\[\TwarD(\C) \colon \Delta\op \times \Delta\op \to \An, \  \TwarD(\C)_{p,q} := \Hom_{\Cat}([p]\ast[q]\op, \C);\]
the Segal condition in either direction follows from the fact that the join $\ast\colon \Cat\times \Cat\to \Cat$ preserves colimits in each variable separately. Evidently, the construction $\TwarD$ defines a functor $\Cat \to \DSeg$. 

We define the $\Q$-construction functor $\Q \colon \ssAn \to \sAn$ by setting for any $X \in \ssAn$
\[\Q(X) \colon \Delta\op \to \An, \ \Q(X)_n := \Hom_{\ssAn}(\TwarD[n], X),\]
and then define the \defi{span category} of $X \in \ssAn$ to be the category associated to the $\Q$-construction, i.e., it is given by the composite functor
$$\Span \colon \ssAn \xrightarrow{\Q} \sAn \xrightarrow{\mathrm{cat}} \Cat.$$
$\Q(\D)$ turns out to be a Segal space for a double category $\D$ more frequently than $\delta(\D)$ (see Corollary \ref{Segal_condition_for_S}), which makes the span category $\Span(\D)$ more accessible in general.

\begin{remark}\label{rem:diag_tward}
The diagonal category of $\TwarD(\C)$ is the usual twisted arrow category of $\C$; in fact, this is already true for the diagonal simplicial space $\delta(\TwarD(\C))$, which is well known to be a category ( = complete Segal space) in this case. 
\end{remark}

\begin{remark}\label{rem:usual_Q}
These definitions of the span category and of the $\Q$-construction generalize the usual ones, which take (single) categories as input. Indeed, for such $\C\in \Cat$, we have a natural equivalence of simplicial spaces
\[\Q(\Sq(\C))\simeq \Q(\C)\]
between the $\Q$-construction of the double category $\Sq(\C)$ and the usual $\Q$-construction of $\C$ (considered as a simplicial space, given by the degreewise groupoid core of a simplicial category that is also often denoted by $\Q(\C)$). This equivalence is obtained by means of the adjunction $(\delta, \Sq)$ and Remark \ref{rem:diag_tward}. It follows that our $\Span(\Sq(\C))$ is identified with the usual span category of $\C$. 

Moreover, for any (adequate) triple $(\C,\C_\dagger, \C^\dagger)$ in the sense of Barwick \cite{Bar3}, we may consider the bisimplicial subspace $\Sq(\C,\C_\dagger, \C^\dagger, \mathrm{cart})\subset \Sq(\C)$ which contains only the $(p,q)$-simplices $X\colon [p]\otimes [q]\to \Sq(\C)$  that send each horizontal morphism $\{i\}\times \{k\leq l\}$ to  $\C_\dagger$, each vertical morphism $\{i\leq j\}\times \{k\}$ to $\C^\dagger$, and each square $\{i < j\}\times \{k < l\}$ to a cartesian square. Then the conditions for $\TwarD[n] \to \Sq(\C)$ to lie in $\Sq(\C, \C_\dagger, \C^\dagger, \cart)$ match exactly the conditions for the associated functor $\Twar[n]\to \C$ to lie in the usual $\Q$-construction of $(\C, \C_\dagger, \C^\dagger)$, so that we obtain a natural equivalence of simplicial spaces
\[ \Q(\Sq(\C,\C_\dagger, \C^\dagger, \mathrm{cart}))\simeq \Q(\C, \C_\dagger, \C^\dagger)\]
with the $\Q$-construction (or effective Burnside $\infty$-category) of this (adequate) triple (see \cite[5.8-5.10]{Bar3}). Again, we obtain an identification of the corresponding span categories by passing to the associated categories.
\end{remark}

\subsection{The step category} 
The step category of a bisimplicial space $X$ is the category $\Span(X\pop2)$ where $(-)\pop2$ indicates passing to the opposite in the second simplicial direction. Let us make this more explicit. For any $\C \in \Cat$, the \defi{arrow double category} $\ArD(\C)$ of $\C$ is given by the double Segal space
\[\ArD(\C) \colon \Delta\op \times \Delta\op \to \An, \ \ArD(\C)_{p,q} :=  \Hom_{\Cat}([p]\ast[q], \C).\]
In other words, we have an identification $\ArD(\C) = \TwarD(\C\pop2)$.
By analogy with the $\Q$-construction, this construction also yields a functor $\oS\colon \ssAn \to \sAn$ defined for any $X \in \ssAn$ by 
\[\oS(X) \colon \Delta\op \to \An, \ \oS(X)_n := \Hom_{\ssAn}(\ArD[n], X)\]
which we refer to as \emph{$\oS$-construction}, because it is an unpointed variant of the $\oS$-construction \cite{Wal} extended to the double-categorical context (see also Section \ref{double_vs_single_span}). 
Note that for any $X \in \ssAn$, we have a canonical identification:
\begin{equation} \label{S-vs-Q} \tag{$\Q$ vs. $\oS$}
\oS(X)\simeq\Q(X\pop2).
\end{equation}
We then define the \defi{step category} of $X$ as the category associated to the $\oS$-construction, i.e., it is given by the composite functor
$$\Stair \colon \ssAn \xrightarrow{\oS} \sAn \xrightarrow{\mathrm{cat}} \Cat$$
and it is related to the span category by $\Stair(X) \simeq \Span(X\pop2)$. Informally, the objects of $\Stair(\D)$, say for $\D\in \DSeg$, are just the objects of $\D$, and each ``step'' in $\D$
\[
\begin{tikzcd}
     c \ar[r] & d \ar[d]\\ & e
\end{tikzcd}
\]
consisting of a horizontal followed by a vertical morphism of $\D$, gives a morphism in $\Stair(\D)$. (In complete analogy to the span category, a general morphism in $\Stair(\D)$ is a composite of such steps, where the length can be reduced to $1$ whenever $\Stair(\D)$ is 
a Segal space.)

\begin{remark}
Similarly to $\TwarD(\C)$, it is also true that the diagonal category of $\ArD(\C)$ is the usual arrow category of $\C$, but this is less evident (because the simplicial space $\delta(\ArD(\C))$ usually fails to be a Segal space) and a consequence of Theorem \ref{S-constr} and Lemma \ref{lem:diagonal_of_ArD} below. 
\end{remark}

\begin{remark}\label{rem:juran}
This version of the $\oS$-construction has also been considered by Juran \cite{Juran} (under the name Cnr) in the situation where it yields a complete Segal space -- he has also obtained Corollary \ref{Segal_condition_for_S}(1) below, which describes when exactly this happens. In this context, he proved that the Step category (in our terminology) is canonically equipped with an orthogonal factorization system given by the images of the horizontal and the vertical morphisms, and that it defines an equivalence between a full subcategory of $\DCat$ and the category of orthogonal factorization systems.
\end{remark}

\subsection{Reformulation of Theorem \ref{span-constr}} 
The definition of the step category $\Stair(\D) = \cat( \oS(\D))$ was meant to emphasize the analogy with the span category $\Span(\D)= \cat(\Q(\D))$ and unify the $\Q$- and $\oS$-constructions in the double-categorical context, but our main result actually amounts to saying that the step category is (again) just the diagonal category. 

\smallskip

There is a natural transformation $w \colon \delta \to \oS$ that is induced by precomposition with the canonical natural inclusion 
$$\ArD[n] \to [n] \otimes [n], \quad (i \leq j) \mapsto (i, j).$$

Using \eqref{S-vs-Q}, the following Theorem shows the first part of Theorem \ref{span-constr} in a more general context where $\DSeg$ is replaced by $\ssAn$. It will be shown in Section \ref{proof-main-thm}. (The second part of Theorem \ref{span-constr} then follows from Proposition \ref{prop:basic_adjunction}.)

\begin{theorem}\label{S-constr}
For each bisimplicial space $X$, the natural map $w_X \colon \delta(X) \to \oS(X)$ becomes an equivalence after categorical realization.
\end{theorem}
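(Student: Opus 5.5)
Both $\delta$ and $\oS$ are restriction functors along cosimplicial objects $\Delta \to \ssAn$, namely $[n]\mapsto [n]\otimes[n]$ and $[n]\mapsto \ArD[n]$. Hence both preserve colimits in $X$, and so does $\cat$. Since every bisimplicial space is a colimit of representables $[p]\otimes[q]$, the plan is to reduce Theorem \ref{S-constr} to the representable case. More precisely, $\cat\circ\delta$ and $\cat\circ\oS$ are colimit-preserving functors $\ssAn\to\Cat$, and $w$ is a natural transformation between them. It therefore suffices to show that $w_{[p]\otimes[q]}$ is a categorical equivalence for all $p,q\ge 0$. By the computation already recorded, $\cat\,\delta([p]\otimes[q]) = [p]\times[q]$. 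The key claim is thus
\[\Stair([p]\otimes[q]) \simeq [p]\times[q],\]
with $w$ inducing this identification.

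To compute $\oS([p]\otimes[q])$, note that a map $\ArD[n]\to[p]\otimes[q]$ is the same as a pair consisting of a map of bisimplicial sets to $[p]$ in the vertical direction and to $[q]$ in the horizontal direction. Since $\ArD[n]_{a,b}$ consists of maps $[a]\ast[b]\to[n]$, such a pair amounts to two order-preserving functions on the poset underlying $\ArD[n]$, compatible with its bisimplicial structure. Concretely, the vertices of $\ArD[n]$ are the pairs $i\le j$, with vertical morphisms changing $i$ and horizontal morphisms changing $j$. A map to $[p]\otimes[q]$ should then be a monotone $\alpha\colon [n]\to[p]$ in the $i$-coordinate together with a monotone $\beta\colon[n]\to[q]$ in the $j$-coordinate. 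I would verify this carefully by checking that the relevant bisimplicial set is the nerve-like object generated by its $(1,0)$- and $(0,1)$-simplices. Granting it, $\oS([p]\otimes[q])_n \cong \Hom([n],[p])\times\Hom([n],[q])$, and this is discrete. So $\oS([p]\otimes[q])$ is the nerve of $[p]\times[q]$, and $w$ is the identity on it, because restricting $[n]\otimes[n]\to[p]\otimes[q]$ along $(i\le j)\mapsto (i,j)$ recovers the same pair $(\alpha,\beta)$. In particular, both simplicial spaces coincide already before categorical realization.

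The main obstacle is this identification. One must make sure that a map $\ArD[n]\to[p]\otimes[q]$ really decomposes as stated, i.e.\ that the vertical coordinate depends only on $i$ and the horizontal coordinate only on $j$. It is conceivable that the images of the "diagonal" vertices $(i,i)$ impose extra constraints. If this naive count fails, I would fall back on describing $\ArD[n]$ as a colimit of the representables $[a]\otimes[b]$ indexed by its nondegenerate simplices $[a]\ast[b]\to[n]$. I would then compute $\Hom(\ArD[n],[p]\otimes[q])$ as a limit of sets $\Hom([a],[p])\times\Hom([b],[q])$. In the worst case the simplicial sets $\delta([p]\otimes[q])$ and $\oS([p]\otimes[q])$ differ, and I would instead show directly that $\oS([p]\otimes[q])$ has categorical realization $[p]\times[q]$. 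For this I would exhibit it as a simplicial set containing $\delta([p]\otimes[q])$, with a retraction induced by $\min$ and $\max$ maps as in the proof of Proposition \ref{prop:basic_adjunction}, and with a natural transformation to the identity that is stationary on vertices.
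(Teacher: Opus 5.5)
\textbf{There is a genuine gap: your reduction to representables fails because $\oS$ does not preserve colimits.} A functor of the form $X\mapsto \Hom_{\ssAn}(c(\bullet),X)$, for a cosimplicial object $c$, is a right adjoint (to left Kan extension along $c$). It commutes with colimits only if every $c(n)$ is completely compact, i.e.\ a retract of a representable.

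This holds for $\delta$, where $c(n)=[n]\otimes[n]$, but not for $\oS$. Already $\ArD[1]=([0]\otimes[1])\cup_{[0]\otimes[0]}([1]\otimes[0])$ is a nontrivial pushout of representables. The element $\id_{\ArD[1]}\in\oS(\ArD[1])_1$ does not lie in the image of $\oS([0]\otimes[1])_1\sqcup_{\oS([0]\otimes[0])_1}\oS([1]\otimes[0])_1$.

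Nor is there any a priori reason for $\Stair=\cat\circ\oS$ to preserve colimits. Given the representable case, that property is equivalent to the theorem itself; it is a consequence of it (e.g.\ via the span-squares adjunction), so it cannot serve as an input. Testing $w$ on the objects $[p]\otimes[q]$, which corepresent $\delta$, only settles the easy case.

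Your computation in that case is in fact correct, and your worry about the diagonal vertices is unfounded. Since $[p]\otimes[q]$ is an external product of nerves, a map $\ArD[n]\to[p]\otimes[q]$ amounts to maps from the realizations of $\ArD[n]$ in each of the two directions to $\Delta^p$ and $\Delta^q$. Both realizations are $\Delta^n$: for instance, $\ArD[n]_{a,\bullet}$ is a disjoint union, indexed by $\alpha\colon[a]\to[n]$, of nerves of the nonempty posets $\{\alpha(a)\le\dots\le n\}$. Hence $w_{[p]\otimes[q]}$ is an isomorphism of simplicial sets.

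What is missing is the corresponding statement on the objects corepresenting $\oS$. By the Yoneda lemma, natural transformations $\Stair\to\diag$ are governed by $\diag(\ArD[n])$, so one needs $\diag(\ArD[n])\simeq\Ar[n]$ (Lemma \ref{lem:diag_ArD}). This is the genuinely nontrivial input, since $\delta(\ArD[n])$ is not a Segal space. The paper obtains it from the spine decomposition of Lemma \ref{lem:decomposition_ArD} together with an explicit retraction of $[n]\times[n]$ onto $\diag(\ArD[n])$. It then produces the inverse of $\cat(w)$ by showing that the spaces of natural transformations $\Stair\to\diag$, $\diag\to\diag$ and $\Stair\to\Stair$ are all contractible.
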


We remark that the inverse of $\cat(w_X) \colon \diag(X) \xrightarrow{\simeq} \Stair(X)$ does not arise from a simplicial map $\oS(X) \to \delta(X)$ before categorical realization. 

\smallskip

When applied to a bisimplicial set $X$ thought of as a discrete bisimplicial space, this theorem recovers \cite[Theorem 1.4.26]{LZ}. In this case, $w_X$ is a map of simplicial sets (= discrete simplicial spaces), and the composite functor $\sSet\subset \sAn\to \Cat$ is the localization at the Joyal equivalences (as follows from the work of Joyal--Tierney \cite[Section 4]{JT}, see also \cite[Remark 2.4]{HS}).

\begin{remark} \label{Seg-rem3}
The proof of Theorem \ref{S-constr} also shows that $w_X$ becomes an equivalence already after passing to the associated Segal spaces. In combination with Remark~\ref{Segal-rem1}, this shows that the span-squares adjunction extends to an adjunction:
\begin{equation} \tag{$\dashv$}
L_{\Seg} \circ \Q \colon \ssAn \rightleftarrows \Seg \colon \Sq(-)\pop2
\end{equation}
where $L_{\Seg} \colon \sAn \to \Seg$ denotes the localization functor/Segal space reflection.
\end{remark}

\section{Analysis of the arrow double category}\label{sec:arrow_double}

\subsection{A colimit decomposition of $\ArD[n]$.} 

The proof of Theorem \ref{S-constr} relies essentially on an analysis of the arrow double category of $[n]$. Note that $\ArD[n]\in \DSeg \subset \ssAn$ is a bisimplicial set, i.e., bi-levelwise discrete. We denote by $\SpineD[n] \subseteq \ArD[n]$ the bisimplicial subset which is spanned by all objects; $(0,1)$-simplices of the form
$$(i \leq j) \to (i \leq j+1);$$
$(1,0)$-simplices of the form 
$$(i \leq j) \to (i+1 \leq j);$$
and $(1,1)$-simplices of the form
\[
 \begin{tikzcd}
  (i \leq j) \ar[r] \ar[d] & (i \leq j+1) \ar[d]\\
  (i+1 \leq j) \ar[r] & (i+1 \leq j+1);
 \end{tikzcd}
\]
$\SpineD[n]$ can be depicted in the following way: 
\begin{equation}\label{eq:ArD} \tag{DSpine}
 \begin{tikzcd}
  (0 \leq 0) \ar[r] & (0\leq 1) \ar[d] \ar[r] & (0\leq 2) \ar[d] \ar[r] & \dots \ar[r]&  (0  \leq n) \ar[d]\\
    & (1\leq 1) \ar[r] & (1\leq 2) \ar[d] \ar[r]   & \dots \ar[r] & (1\leq n) \ar[d]\\
    &&\ddots & \dots & \vdots \ar[d]\\
    &&& & (n\leq n).
 \end{tikzcd}
\end{equation}
We note that this picture is commonly also used to depict the category $\Ar[n]$. However, we distinguish here between the horizontal and vertical edges, and squares are understood in a bisimplicial sense. Moreover, there are no hidden composite morphisms 
or squares in $\SpineD[n]$.

\begin{lemma}\label{lem:decomposition_ArD}
The inclusion $\SpineD[n] \hookrightarrow \ArD[n]$ becomes an equivalence of double Segal spaces after double-categorical realization. 
\end{lemma}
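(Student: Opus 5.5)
We have to show that every simplex of $\ArD[n]$ is generated, up to double-categorical realization, by the generating squares in $\SpineD[n]$. The plan is an induction that builds up $\ArD[n]$ from $\SpineD[n]$ in two stages, using the Segal conditions in the two directions separately. Recall that the double-categorical realization factors through the reflection onto bisimplicial spaces satisfying the Segal condition in only one direction. So it suffices to exhibit a chain of inclusions $\SpineD[n]\subseteq Y\subseteq \ArD[n]$ such that each inclusion is inverted by a suitable one-directional Segal reflection, or by the full double-categorical one.

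\textbf{Stage 1: the horizontal rows.} Fix a row $i$. The horizontal objects $(i\le j)$, $j\ge i$, with the edges $(i\le j)\to(i\le j+1)$ form the spine of $[n-i]$. Likewise, the squares between rows $i$ and $i+1$ form a horizontal spine of squares. Let $Y\subseteq \ArD[n]$ be the bisimplicial subset containing all $(p,q)$-simplices whose vertical extent is at most one step $i\to i+1$ (with $p$ arbitrary only through degeneracies). In other words, $Y$ is the union of the rows $(p=0)$, which are the nerves of $[n-i]$, and of the strips $[1]\otimes[\,\cdot\,]$ between adjacent rows. The inclusion $\SpineD[n]\to Y$ is built by iterated pushouts along horizontal spine inclusions $[1]\otimes \mathrm{Spine}[k]\to [1]\otimes[k]$ and $[0]\otimes\mathrm{Spine}[k]\to[0]\otimes[k]$. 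These are inverted by the horizontal Segal reflection.

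One subtlety needs care: the strip between rows $i$ and $i+1$ is a map $[1]\otimes[n-i]\to\ArD[n]$ whose first column degenerates at the corner $(i+1\le i)$. More precisely, the top row starts at $(i\le i)$ while the bottom row starts at $(i+1\le i+1)$. So the strip is really a copy of $[1]\otimes[n-i-1]$ together with one extra horizontal edge $(i\le i)\to(i\le i+1)$ on top. This is a pushout of $[1]\otimes[n-i-1]$ and $[0]\otimes[1]$ along a point, which is fine.

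\textbf{Stage 2: columns.} Symmetrically, $\ArD[n]$ is obtained from $Y$ by vertical Segal gluing. A $(p,q)$-simplex of $\ArD[n]$ is a functor $[p]\ast[q]\to[n]$, that is, rows $i_0\le\dots\le i_p$ and columns $j_0\le\dots\le j_q$ with $i_p\le j_0$. Each such simplex is determined by its restrictions to consecutive vertical pairs $(i_a,i_{a+1})$, glued along rows. Consecutive vertical steps $i\to i'$ with $i'>i+1$ must also be generated; they arise as vertical composites of adjacent-row strips through intermediate rows. This identifies $\ArD[n]$ with an iterated colimit of $Y$-pieces of the form $[p]\otimes[q]$ glued along vertical spines, and so vertical Segal reflection inverts $Y\to\ArD[n]$.

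\textbf{Main obstacle.} The hard step is the precise colimit statement: proving that $\ArD[n]$ is the colimit, in $\DSeg$, of the diagram of its "rectangular" sub-double-categories $[\{i,\dots,i'\}]\otimes[\{j,\dots,j'\}]$ with $i'\le j$, indexed by the poset of such rectangles. The difficulty is that the region $\{(i,j): i\le j\}$ is a staircase, not a rectangle. I would handle this by induction on $n$: write $\ArD[n]$ as the pushout of $\ArD[n-1]$ (rows and columns $\le n-1$) and the last column $[n]\otimes[0]$ with its adjoining squares $[n-1]\otimes[1]$, glued along $[n-1]\otimes[0]$. I would check this pushout levelwise on bisimplicial sets; it holds because any $(p,q)$-simplex with $j_q=n$ either has $j_0=n$ or factors through the last column strip after Segal decomposition. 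Then apply the inductive hypothesis together with the observation that $[n-1]\otimes[1]$ and $[n]\otimes[0]$ are realizations of their own spines.
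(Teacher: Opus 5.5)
Your two-stage strategy $\SpineD[n]\subseteq Y\subseteq\ArD[n]$ is sound and gives a proof different from the paper's. However, you have misjudged where the work lies: Stage~2 needs neither an induction nor a decomposition into rectangles. Write a $(p,q)$-simplex of $\ArD[n]$ as a pair $(\alpha\colon[p]\to[n],\ \beta\colon[q]\to[n])$ with $\alpha(p)\le\beta(0)$. Then $\SpineD[n]$ consists exactly of the pairs with $\alpha\in\Spine[n]_p$ and $\beta\in\Spine[n]_q$, and your $Y$ consists of those with $\alpha\in\Spine[n]_p$.

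The staircase constraint involves only $\beta(0)$. So in a fixed horizontal degree $q$ the map $Y_{\bullet,q}\to\ArD[n]_{\bullet,q}$ is $\coprod_{\beta}\bigl(\Spine[\beta(0)]\to[\beta(0)]\bigr)$. This is inverted by the vertical Segal reflection (computed degreewise in $q$), hence by double-categorical realization. Dually, in a fixed vertical degree $p$, the map $\SpineD[n]_{p,\bullet}\to Y_{p,\bullet}$ is $\coprod_{\alpha\in\Spine[n]_p}\bigl(\Spine\{\alpha(p),\dots,n\}\to\{\alpha(p)\le\dots\le n\}\bigr)$, which is the clean form of Stage~1. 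Describing $Y$ as obtained by iterated pushouts along exactly $[1]\otimes\Spine[k]\to[1]\otimes[k]$ is not quite accurate, since adjacent strips meet in full horizontal simplices, but this degreewise description makes that irrelevant. Even the rectangle statement you single out holds already in $\ssAn$: the admissible rectangles containing a simplex $(\alpha,\beta)$ form a poset with least element $\{\alpha(0),\dots,\alpha(p)\}\otimes\{\beta(0),\dots,\beta(q)\}$.

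Your inductive fallback is essentially the paper's proof. The paper splits $\SpineD[n]$ along column $n-1$ and applies the induction hypothesis to the part in columns $\le n-1$. It then checks that gluing $\ArD[n-1]$ to the last two columns along column $n-1$ gives $\ArD[n]$ after horizontal Segal reflection in each vertical degree $p$. This is done summand by summand over $\alpha\colon[p]\to[n]$, via $\{\alpha(p),\dots,n-1\}\cup_{\{n-1\}}\{n-1\le n\}\to\{\alpha(p),\dots,n\}$. So, contrary to your sketch, this is not a pushout of bisimplicial sets: for $n\ge 2$ the horizontal edge $(0\le 0)\to(0\le n)$ lies in neither piece. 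On the other hand, your use of the full pieces $[n-1]\otimes[1]$ and $[n]\otimes[0]$, justified by their being realizations of their spines, is exactly right. The summand-wise identification needs all vertical chains $\alpha$ to be present in the last two columns; for $n=3$ and $\alpha=(0,2)$ the spine pieces contribute nothing, leaving $\{2\}\to\{2\le 3\}$.

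As for what each route buys: the paper adds one column at a time and uses only the horizontal reflection in its key step, at the price of the induction and this bookkeeping. Your route is non-inductive and exhibits the lemma as ``complete horizontally, then vertically'', each step being a degreewise coproduct of spine inclusions.
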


\begin{proof}
We proceed by induction on $n$. The claim is obvious for $n=0$. Let $n>0$ and write the bisimplicial set $\SpineD[n]$ as the union of two 
subobjects $A_1$ and $A_2$. $A_1$ is spanned by $(i \leq j)$ with $j \leq n-1$, and $A_2$ is spanned by $(i \leq j)$ with $j \geq n-1$, and their intersection $A_0$ is spanned by $(i \leq j)$ with $j=n-1$. This gives a pushout decomposition of $\SpineD[n]$ in $\ssSet$, and hence also in $\ssAn$. 

By induction, the inclusion of bisimplicial sets
$$A_1 \simeq \SpineD[n-1] \to \ArD[n-1]$$ 
becomes an equivalence after double-categorical realization. Hence, given that double-categorical realization preserves colimits, it  suffices to show that the canonical inclusion
\begin{equation} \label{canonical-incl} \tag{*}
\ArD[n-1] \cup_{A_0} A_2 \to \ArD[n]
\end{equation}
is an equivalence after double-categorical realization. This claim follows from the stronger assertion that the map \eqref{canonical-incl} is an equivalence of simplicial objects in Segal spaces, regarded levelwise in each vertical degree, i.e., we claim that for each $[p]\in \Delta$, the induced inclusion of simplicial (discrete) spaces
\begin{equation} \label{canonical-incl2} \tag{**}
\ArD[n-1]_{p,\bullet} \cup_{(A_0)_{p,\bullet}} (A_2)_{p,\bullet} \to (\ArD[n])_{p,\bullet}
\end{equation}
becomes an equivalence after Segal space reflection. 

To see this, we observe that the target simplicial space of \eqref{canonical-incl2} decomposes canonically into a disjoint union indexed by the maps $\alpha\colon [p] \to [n]$, according to the value of each $(p,q)$-bisimplex $[p]\ast [q]\to [n]$ of $\ArD[n]$ on the first join summand (which remains constant under the simplicial operations in the second direction). The summand corresponding to $\alpha \colon [p] \to [n]$ is (the nerve of) the category
$$\{\alpha(p)\leq \dots \leq n\}=:\alpha(p)^+$$ 
since the restriction of a $(p, q)$-bisimplex $[p] \ast [q] \to [n]$ on the second join summand can take arbitrary values $\geq \alpha(p)$. This decomposition induces a decomposition of all terms that arise in the map \eqref{canonical-incl2}, which is compatible with the map and with the pushout decomposition of the domain. Explicitly, the restriction of the map \eqref{canonical-incl2} on each summand $\alpha \colon [p] \to [n]$ becomes isomorphic to 
\[(\alpha(p)^+ \cap [n-1]) \cup_{\alpha(p)^+\cap \{n-1\}} (\alpha(p)^+\cap \{n-1\leq n\}) \to \alpha(p)^+\]
which is indeed an equivalence after Segal space reflection (also for $\alpha(p)=n$).
\end{proof}

\begin{corollary}\label{Segal_condition_for_S} Let $\D \in \DSeg$.
\begin{enumerate}
    \item[(1)] The simplicial space $\oS(\D)$ is a Segal space if and only if each diagram
\[\xymatrix{
 d_{01}   \ar[d]\\
 d_{11}  \ar[r]  & d_{12} 
}\]
in $\D$ (consisting of a morphism in the first direction, followed morphism in the second direction) extends uniquely to a square (= $(1,1)$-bisimplex) in $\D$. More precisely, the corresponding boundary map 
$$\D_{11}\to \D_{10} {}^t\times^s_{\D_{00}} \D_{01}$$ 
is an equivalence of spaces.
\item[(2)] The simplicial space $\Q(\D)$ is a Segal space if and only if each diagram
\[\xymatrix{
d_{01} \ar[d] &\\
d_{11} & d_{12} \ar[l] \\
}\]
in $\D$ extends uniquely to a square in $\D$. More precisely, the corresponding boundary map 
$$\D_{11}\to \D_{10} {}^t\times^t_{\D_{00}} \D_{01}$$ 
is an equivalence of spaces.
\end{enumerate}
\end{corollary}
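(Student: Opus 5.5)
The plan is to use Lemma \ref{lem:decomposition_ArD}. Since $\D$ is a double Segal space, maps out of a bisimplicial space into $\D$ are unchanged by double-categorical realization, so the lemma gives
\[\oS(\D)_n=\Hom_{\ssAn}(\ArD[n],\D)\simeq \Hom_{\ssAn}(\SpineD[n],\D).\]
The right-hand side is an iterated fibre product over $\D_{00}$, $\D_{01}$, $\D_{10}$ of copies of $\D_{11}$, indexed by the squares of the staircase picture \eqref{eq:ArD}. So $\oS(\D)_n$ is the space of diagrams of shape \eqref{eq:ArD} in $\D$. The Segal map sends it to $\oS(\D)_1\times_{\oS(\D)_0}\cdots\times_{\oS(\D)_0}\oS(\D)_1$, where $\oS(\D)_1$ is the space of steps $\D_{01}\times_{\D_{00}}\D_{10}$ (horizontal, then vertical) and $\oS(\D)_0=\D_{00}$. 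Concretely, the Segal map restricts a staircase diagram to its outer boundary: the top row and the right column.

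First I treat $n=2$ to get the "only if" direction. $\SpineD[2]$ consists of the single square
\[(0\le1)\to(0\le2),\quad (0\le1)\to(1\le1),\quad (1\le1)\to(1\le2),\quad (0\le2)\to(1\le2),\]
together with the edges $(0\le0)\to(0\le1)$ and $(1\le2)\to(2\le2)$. The Segal map for $n=2$ remembers the two steps
\[(0\le0)\to(0\le1)\to(1\le1)\quad\text{and}\quad (1\le1)\to(1\le2)\to(2\le2).\]
These are the outer edges together with the vertical-then-horizontal path $(0\le1)\to(1\le1)\to(1\le2)$. Hence the Segal map at $n=2$ is a base change of the boundary map
\[\D_{11}\to \D_{10}\,{}^t\times^s_{\D_{00}}\D_{01},\]
pulled back along the freely varying extra edges. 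Since that base change is along a surjective-on-components map (the outer edges can be chosen arbitrarily, e.g.\ as identities), the Segal map at $n=2$ is an equivalence if and only if the boundary map is.

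For the "if" direction I induct on $n$, filling the staircase square by square. Given a boundary zigzag in the Segal fibre product, the squares of \eqref{eq:ArD} can be adjoined one at a time, proceeding diagonally. Each new square is glued along a vertical edge followed by a horizontal edge, i.e.\ along exactly the corner data $\D_{10}\times_{\D_{00}}\D_{01}$. Each such step is therefore a pullback of the boundary map, hence an equivalence. Composing these shows the Segal map is an equivalence.

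Part (2) follows from part (1) via \eqref{S-vs-Q}. We have $\Q(\D)=\oS(\D\pop2)$, and reversing the horizontal direction turns the relevant boundary map into the stated one with target $\D_{10}\,{}^t\times^t_{\D_{00}}\D_{01}$.

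The main obstacle is making the square-by-square filling precise as an iterated pushout decomposition of $\SpineD[n]$ in $\ssAn$. The ordering must be chosen so that each attachment is a pushout along a corner $\Lambda$-shape ($(1,0)$-edge $\cup$ $(0,1)$-edge) into $[1]\otimes[1]$. I would do this with the same row-by-row induction used in the proof of Lemma \ref{lem:decomposition_ArD}.
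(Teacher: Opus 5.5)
Your proposal is correct and follows the paper's proof: reduce to $\SpineD[n]$ via Lemma~\ref{lem:decomposition_ArD}, build $\SpineD[n]$ from the Segal spine by iterated pushouts along the vertical-then-horizontal corner into $[1]\otimes[1]$, and, for the converse, note that your base change along a projection with an identity section is exactly the paper's observation that the boundary map is a retract of the degree-$2$ Segal map. One slip to fix: the Segal map restricts to the inner staircase $(i\le i)\to(i\le i+1)\to(i+1\le i+1)$, not to the top row and right column, as your own $n=2$ computation correctly shows.
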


\begin{proof} 
We only need to prove (1); (2) is just the $(-)\pop2$-dual statement using $\Q(\D)=\oS(\D\pop2)$. By definition, $\oS(\D)$ is a Segal space if the restriction map 
$$\Hom_{\ssAn}(\ArD[n], \D) \to \Hom_{\ssAn}(\ArD(\{0\leq 1\})\cup_{\ArD(\{1\})} \dots \cup_{\ArD(\{n-1\})} \ArD(\{n-1\leq n\}), \D)$$
along the canonical inclusion is an equivalence for $n \geq 2$. 
By Lemma \ref{lem:decomposition_ArD}, we may replace $\ArD[n]$ by $\SpineD[n]$. 
Then note that each inclusion 
$$
\ArD(\{0\leq 1\})\cup_{\ArD(\{1\})} \ArD(\{1\leq 2\})\cup_{\ArD(\{2\})}\dots \cup_{\ArD(\{n-1\})} \ArD(\{n-1\leq n\})\to \SpineD[n]
$$
is obtained by iterative pushouts along 
\[
 \begin{tikzcd}
  \bullet \ar[d] \\ \bullet \ar[r] & \bullet
 \end{tikzcd}
 \quad \to \quad 
 \begin{tikzcd}
  \bullet \ar[r] \ar[d] & \bullet \ar[d] \\
  \bullet \ar[r] & \bullet
 \end{tikzcd}
\]
As a consequence, the Segal condition on $\oS(\D)$ reduces to the unique extension property of $\D$. This proves the ``if'' statement; for the converse it suffices to note that the boundary map in question is a retract of the Segal map in degree $2$. The retraction is given by forgetting the value of the diagrams at $(0\leq 0)\to (0\leq 1)$ and $(1\leq 2)\to (2\leq 2)$, and the retract-inclusion extends the diagrams using the identities.
\end{proof}

\subsection{The diagonal category of $\ArD(\C)$.} 
We can identify the $\Stair$-category (hence, \emph{a posteriori} also the diagonal category) of $\ArD(\C)$ with the usual arrow category of $\C$. In fact, already the $\oS$-construction is a category in this case:

\begin{lemma}\label{lem:diagonal_of_ArD}
For any $\C \in \Cat$, there is a natural equivalence of simplicial spaces $\oS(\ArD(\C)) \simeq \Ar(\C)$. 
\end{lemma}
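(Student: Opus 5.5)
We compute levelwise: $\oS(\ArD(\C))_n = \Hom_{\ssAn}(\ArD[n], \ArD(\C))$, and we want this to be naturally equivalent to $\Ar(\C)_n = \Hom_\Cat([n]\times[1],\C)$. The approach is to express $\ArD(\C)$ as a right adjoint evaluated on $\C$, so that maps $\ArD[n]\to\ArD(\C)$ correspond to maps of categories out of a category built from $\ArD[n]$.

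The functor $\ArD\colon \Cat\to\ssAn$, $\C\mapsto \Hom_\Cat([\bullet]\ast[\ast],\C)$, is the right adjoint of the colimit-preserving functor $L\colon \ssAn\to\Cat$ determined by $[p]\otimes[q]\mapsto [p]\ast[q]$. Hence
\[\oS(\ArD(\C))_n\simeq \Hom_\Cat(L(\ArD[n]),\C).\]
It therefore suffices to produce a natural equivalence $L(\ArD[n])\simeq [n]\times[1]$ in $\Cat$, compatible with the cosimplicial structure in $n$.

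By Lemma \ref{lem:decomposition_ArD}, the inclusion $\SpineD[n]\to\ArD[n]$ is inverted by double-categorical realization. Since $L$ factors through double-categorical realization (its right adjoint lands in $\DSeg$), $L$ also inverts this inclusion. Now $\SpineD[n]$ is an iterated pushout of copies of $[0]\otimes[0]$, $[1]\otimes[0]$, $[0]\otimes[1]$ and $[1]\otimes[1]$ glued along vertices and edges. Under $L$ these become $[0]$, $[1]$, $[1]$ and $[1]\ast[1]=[3]$, respectively. A square $(i\le j)\to(i\le j+1),\ (i+1\le j)\to(i+1\le j+1)$ maps to the linear order $(i,j)<(i+1,j)<(i,j+1)<(i+1,j+1)$. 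Here the first join factor is the vertical direction $\{i,i+1\}$ and the second is the horizontal direction, with the join putting vertical objects before horizontal ones.

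This looks wrong at first: the object $(i+1\le j)$ lies before $(i\le j+1)$. However, $L$ applied to a single object records only the $[0]\ast[\emptyset]$-type data. It is more accurate to describe the answer directly: $L(\ArD[n])$ should be the category obtained by gluing the spine picture, and an object $(i\le j)$ of $\ArD[n]$, being a map $[0]\ast[0]=[1]\to[n]$, corresponds to a pair of objects $i\le j$ of $[n]$. So the cleaner route is not to compute $L$ by pieces but to identify the adjoint correspondence concretely: a map $\ArD[n]\to\ArD(\C)$ is, by Yoneda and the Segal reduction to $\SpineD[n]$, a compatible system of functors $[p]\ast[q]\to\C$ indexed by the simplices of $\ArD[n]$. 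Equivalently, it is a functor $\operatorname{colim}_{([p]\ast[q]\to[n])}[p]\ast[q]\to\C$.

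The key step, and the main obstacle, is the identification
\[\operatorname{colim}_{(\sigma\colon[p]\otimes[q]\to\ArD[n])} [p]\ast[q]\;\simeq\;[n]\times[1],\]
natural in $[n]\in\Delta$. The natural comparison sends the first join factor to $[n]\times\{0\}$ and the second to $[n]\times\{1\}$ via $\sigma$. I would verify it by restricting to $\SpineD[n]$ using Lemma \ref{lem:decomposition_ArD}. The colimit then becomes a gluing of copies of $[1]$ (for $[0]\ast[0]$ from objects $(i\le j)$, giving arrows $(i,0)\to(j,1)$), of $[2]$ (from edges), and of $[3]=[1]\ast[1]$ (from squares). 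Inducting on $n$ exactly as in the proof of Lemma \ref{lem:decomposition_ArD}, the squares glue into the poset $[n]\times[1]$ via standard pushouts of posets along $[2]$'s. These pushouts are colimits in $\Cat$ because they are Segal-type (spine) decompositions.

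Naturality in $n$ follows because every map is defined by postcomposition with $\sigma$. Combining the steps gives
\[\oS(\ArD(\C))_n\simeq\Hom_\Cat([n]\times[1],\C)=\Ar(\C)_n,\]
naturally in $n$ and $\C$.
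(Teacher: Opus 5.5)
Your reduction is sound, and your comparison map is the paper's. $\ArD$ is right adjoint to the colimit-preserving extension $L$ of $([p],[q])\mapsto[p]\ast[q]$, and $L$ inverts $\SpineD[n]\hookrightarrow\ArD[n]$ by Lemma \ref{lem:decomposition_ArD}. Your cocone $[p]\ast[q]\to[n]\times[1]$ is adjoint to restriction along $(\id,\const_{0\leq 1})\colon\ArD[n]\to\ArD([n]\times[1])$, which is exactly the paper's $\theta_\C$.

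\textbf{The gap.} The step you call the main obstacle, $L(\SpineD[n])\simeq[n]\times[1]$, is asserted rather than proved. The justification offered (``pushouts of posets along $[2]$'s are colimits in $\Cat$ because they are Segal-type decompositions'') is not a valid principle. A pushout of posets along a full subposet need not even be a poset: gluing $\{a<x,\ a<y\}$ and $\{x<b,\ y<b\}$ along $\{x,y\}$ gives two distinct morphisms $a\to b$. So you must actually check that all parallel composites in $[n]\times[1]$ get identified. Concretely, you would have to show that the simplicial subset of $N([n]\times[1])$ generated by the $3$-simplices $(i,0)<(i+1,0)<(j,1)<(j+1,1)$ for $0\le i<j\le n-1$, together with $(0,0)<(0,1)<(1,1)$ and $(n-1,0)<(n,0)<(n,1)$, has categorical realization $[n]\times[1]$.

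Running the induction of Lemma \ref{lem:decomposition_ArD} does not make this automatic. There the gluing is along $L(A_0)\simeq[n]$, not along $[2]$'s. You would still have to identify $L(A_2)$ separately. The pushout $([n-1]\times[1])\cup_{L(A_0)}L(A_2)$ is then not a spine pushout, so it needs its own argument.

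Your intermediate paragraph is also wrong as written. In fact $L([0]\otimes[0])=[1]$ and $L([1]\otimes[0])=L([0]\otimes[1])=[2]$. The vertices of $[1]\ast[1]$ go to $(i,0)<(i+1,0)<(j,1)<(j+1,1)$, not to objects of $\ArD[n]$.

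\textbf{How the paper avoids this.} The paper never computes $L(\ArD[n])$ for $n\ge 2$. It applies Corollary \ref{Segal_condition_for_S}(1) to $\ArD(\C)$, using the single pushout $\{0\le 1\}\ast\{0\}\cup_{\{1\}\ast\{0\}}\{1\}\ast\{0\le 1\}\simeq\{0\le1\}\ast\{0\le1\}$ in $\Cat$. This shows $\oS(\ArD(\C))$ is a Segal space, so $\theta_\C$ only needs checking on $0$- and $1$-simplices.

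In your language the fix goes as follows.
\begin{enumerate}
\item Segalness for all $\C$ gives, by Yoneda, $L(\ArD[n])\simeq L(\ArD[1])\cup_{L(\ArD[0])}\dots\cup_{L(\ArD[0])}L(\ArD[1])$.
\item The same decomposition holds for $[n]\times[1]$, since $-\times[1]$ preserves colimits in $\Cat$. Your comparison map respects both decompositions because it is natural in $[n]$.
\item The pieces match. First, $L(\ArD[0])=[0]\ast[0]=[1]$. Second, $\ArD[1]=([1]\otimes[0])\cup_{[0]\otimes[0]}([0]\otimes[1])$ as bisimplicial sets, so $L(\ArD[1])=[2]\cup_{[1]}[2]$, glued along the long edges, which is $[1]\times[1]$.
\end{enumerate}
Replacing your inductive sketch with this argument completes the proof.
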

\begin{proof} First note that $\oS(\ArD(\C))$ is a Segal space by Corollary \ref{Segal_condition_for_S}. Indeed, $\ArD(\C)$ satisfies the hypothesis of Corollary \ref{Segal_condition_for_S} by the following pushout square in $\Cat$
\[
 \begin{tikzcd}
  \{1\} \ast \{0\} \ar[r] \ar[d] & \{1\} \ast \{0\leq 1\} \ar[d]\\
  \{0\leq 1\} \ast \{0\} \ar[r] & \{0\leq 1\} \ast \{0 \leq 1\}. 
 \end{tikzcd}
\]
We define the desired natural equivalence $\theta_{\C} \colon \Ar(\C) \to \oS(\ArD(\C))$, first on $0$- and $1$-simplices: it is the identity on $0$-simplices, and on $1$-simplices it rearranges a commutative square in $\C$ as the composite of a horizontal followed by a vertical arrow of $\ArD(\C)$:
\[
 \begin{tikzcd}
  a \ar[r] \ar[d] \ar[r]  & b \ar[d]\\
  c \ar[r] & d
 \end{tikzcd}
 \quad \mapsto\quad 
 \begin{tikzcd}
  (a\to b) \ar[r] &  (a\to  d) \ar[d] \\
  ~ & (c\to d).
 \end{tikzcd}
\]
This is an equivalent codification of the same data as can be seen from the following pushout square in $\Cat$
\[
 \begin{tikzcd}
   \{0\}\ast \{1\} \ar[r] \ar[d] & \{0\} \ast \{0\leq 1\} \ar[d] \\
   \{0\leq 1\} \ast \{1\} \ar[r] & {[1]\times [1]}.
 \end{tikzcd}
\]
More generally, the map $\theta_{\C}$ on $n$-simplices is defined by
\[\Hom_{\Cat}([n]\times [1], \C) \xrightarrow{\ArD} \Hom_{\ssAn}(\ArD([n]\times [1]), \ArD(\C) ) \to \Hom_{\ssAn} (\ArD[n], \ArD(\C))\]
where the second map restricts along the map (natural in $[n]$) 
\[(\id, \const_{0\leq 1}) \colon \ArD[n]\to \ArD[n]\times \ArD[1] \simeq \ArD([n]\times [1]).\]
Then $\theta_{\C}$ is an equivalence of Segal spaces because it is an equivalence on $0$- and $1$-simplices. 
\end{proof}

%%%%%%%%%%%%%%%%%%%%%%%%%%%%%%%%%%%%%%%%%%%%%%%%%%%%%%%
%%%%%%%%%%%%%%%%%%%%%%%%%%%%%%%%%%%%%%%%%%%%%%%%%%%%%%%%%

\section{Proof of Theorem \ref{S-constr}} \label{proof-main-thm}

We first prove the following special case of Theorem \ref{S-constr}. 

\begin{lemma}\label{lem:diag_ArD}
The functor $\cat(w_{\ArD[n]}) \colon \diag(\ArD[n]) \to \Stair(\ArD[n])\;(\simeq \Ar[n])$ is an equivalence.
\end{lemma}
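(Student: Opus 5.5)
The plan is to compute both sides through the colimit decomposition of Lemma \ref{lem:decomposition_ArD} and to compare the results on generating pieces.

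\emph{Step 1: $\diag$ inverts double-categorical realization equivalences.} The functor $\diag=\cat\circ\delta\colon \ssAn\to\Cat$ is left adjoint to $\Sq$, and $\Sq$ takes values in $\DSeg$. Hence $\diag$ factors through the double-categorical realization $\ssAn\to\DSeg$ and inverts every map that this realization inverts. By Lemma \ref{lem:decomposition_ArD}, it follows that
\[\diag(\SpineD[n])\to\diag(\ArD[n])\]
is an equivalence. It therefore suffices to show that the composite
\[\diag(\SpineD[n])\to \diag(\ArD[n])\xrightarrow{\cat(w)}\Stair(\ArD[n])\simeq\Ar[n]\]
is an equivalence. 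Here the last identification is $\theta$ from Lemma \ref{lem:diagonal_of_ArD}, taken for $\C=[n]$, so that $\Ar[n]$ is the poset of pairs $i\le j$.

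\emph{Step 2: compute $\diag(\SpineD[n])$.} The bisimplicial set $\SpineD[n]$ is an iterated pushout of representables $[1]\otimes[1]$, $[1]\otimes[0]$, $[0]\otimes[1]$ and $[0]$, glued along edges and vertices. Since $\diag$ preserves colimits and $\diag([m]\otimes[k])=[m]\times[k]$, the category $\diag(\SpineD[n])$ is the colimit in $\Cat$ of the unit squares $\{i\le i+1\}\times\{j\le j+1\}$ of the staircase \eqref{eq:ArD}, glued along their common edges.

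I will proceed by induction on $n$, using the pushout $A_1\cup_{A_0}A_2$ from the proof of Lemma \ref{lem:decomposition_ArD}. The corresponding decomposition of the poset,
\[\Ar[n]=\Ar[n-1]\cup_{\{j=n-1\}}\{j\ge n-1\},\]
should be a pushout in $\Cat$. The strip $\{j\ge n-1\}$ is a poset of the form $[n-1]\times[1]$ with a final vertex $(n\le n)$ appended below. This is where the generalized Segal property of products of simplices enters: the union of the unit squares of $[a]\times[b]$ is a categorical equivalence onto $[a]\times[b]$, since the inclusion of that union into the nerve is inner anodyne. Likewise, gluing two posets along a full subposet through which all the relevant morphisms factor (here the column $j=n-1$) yields a pushout in $\Cat$, and this can be checked on nerves as an inner anodyne inclusion. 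Together these give $\diag(\SpineD[n])\simeq\Ar[n]$.

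\emph{Step 3: identify the map.} The composite from Step 1 is a functor into the poset $\Ar[n]$. On each generating square it is the inclusion of that unit square, as one sees by unwinding $w$ and $\theta$ on $(1,1)$-simplices $[1]\otimes[1]\to\ArD[n]$. Because $\Ar[n]$ is a poset, a functor into it is determined by its effect on objects, up to a contractible choice. It therefore agrees with the equivalence from Step 2, which proves the lemma.

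I expect the main obstacle to be Step 2, namely verifying that the gluing of squares really computes $\Ar[n]$ in $\Cat$ and not merely in simplicial sets. My first attempt will be to reduce it to inner anodyne inclusions of nerves, using the Joyal--Tierney comparison cited after Theorem \ref{S-constr}.
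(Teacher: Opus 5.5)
Your proposal is correct, but it takes a different route from the paper's proof. In fact it is exactly the alternative the paper mentions in the remark after the lemma: prove directly, by induction, that $\Spine\Ar[n]=\delta(\SpineD[n])\subset\Ar[n]$ is a categorical equivalence, then combine this with Lemma \ref{lem:decomposition_ArD}.

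The paper's proof never computes a pushout in $\Cat$. It shows that $\diag(\ArD[n])$ is a poset by exhibiting it as a retract of $[n]\times[n]$:
\begin{itemize}
\item the map $I$ is induced by the embedding $\ArD[n]\subset[n]\otimes[n]$;
\item the retraction $R\colon\Spine[n]\times\Spine[n]\to\diag(\ArD[n])$ is built by filling the cells below the diagonal along ``step $\subset$ square'' inclusions, which admit retractions after categorical realization;
\item $R\circ I\simeq\id$ because $R$ is the inclusion on $\delta(\SpineD[n])$, and that inclusion is a categorical equivalence by Lemma \ref{lem:decomposition_ArD}. This is the same input as your Step 1.
\end{itemize}
The identity-on-objects functor to $\Ar[n]$ is then a map of posets, and fullness is read off from $R$.

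The paper's route needs only these elementary extension and retraction arguments. Yours needs two outside facts: that $\Spine[a]\times\Spine[b]\subset\Delta^a\times\Delta^b$ is inner anodyne, and a gluing criterion for posets in $\Cat$. In return, your route gives the presentation of $\Ar[n]$ by its unit squares directly, rather than as a corollary.

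When you write out Step 2, two points need care.
\begin{itemize}
\item Mere existence of factorizations through the column $\{j=n-1\}$ does not make the gluing a pushout in $\Cat$. You need $\Ar[n-1]$ to be a sieve and $\{j\ge n-1\}$ a cosieve in $\Ar[n]$. You also need, for each $(i\le j)\le(i'\le n)$ with $j<n-1$, the poset $\{(k\le n-1): i\le k\le\min(i',n-1)\}$ of intermediate objects to be weakly contractible. Here it is a nonempty chain, so this holds.
\item You should check explicitly that gluing the edge $(n-1\le n)\to(n\le n)$ at the terminal object of $[n-1]\times[1]$ yields the cone on $[n-1]\times[1]$, which is the strip $\{j\ge n-1\}$.
\end{itemize}
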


\begin{proof}
By Lemma \ref{lem:diagonal_of_ArD}, the target is equivalent to $\Ar[n]$, so we can consider the functor in question as a functor $\diag(\ArD[n])\to \Ar[n]$, which is still the identity on objects (and is specified by this property because the target is a poset).

We show next that the source of this functor is a poset, by showing it to be a retract of $[n]\times [n]$. A functor $I\colon \diag (\ArD[n])\to [n]\times [n]$ is obtained from the embedding $\ArD[n]\to [n]\otimes [n]$ (which sends $i\leq j$ to $(i,j)$) by passing to diagonals. In order to find a retraction to $I$, we view $[n]$ as the categorical realization of $\Spine[n]=\{0\leq 1\}\cup_{\{1\}} \dots \cup_{\{n-1\}} \{n-1\leq n\}$, and consider the map $R\colon \Spine[n]\times \Spine[n]\to \diag(\ArD[n])$ defined as follows:
\begin{equation}\label{eq:square-diagram}
\begin{tikzcd}
    (0\leq 0) \ar[r] \ar[d] & (0\leq 1) \ar[r] \ar[d] & \dots \ar[r] & (0\leq n) \ar[d]\\
    (0\leq 0) \ar[r] \ar[d] & (1\leq 1) \ar[r] \ar[d] & \dots \ar[r] & (1\leq n) \ar[d]\\
    \vdots \ar[d] & \vdots\ar[d] && \vdots\ar[d] \\
    (0\leq 0) \ar[r] & (1\leq 1) \ar[r] & \dots \ar[r] & (n\leq n).
\end{tikzcd}
\end{equation}
Note that this is \emph{not} well defined in $\delta(\ArD[n])$ because of the part of the diagram which is below the diagonal. In detail, the map $R$ can be constructed as follows. First note that the inclusion $\ArD[n] \subseteq  [n]\otimes [n]$ restricts to an inclusion $\delta(\SpineD[n])\subseteq \Spine[n]\times \Spine[n]$ of simplicial sets which contains exactly the simplices that have no vertex of the form $(i,j)$ with $i>j$ (i.e., no vertex situated lower-left to the diagonal); visually, this is simply the obvious inclusion of \eqref{eq:ArD} into \eqref{eq:square-diagram}.  The functor $R$ is defined on $\delta(\SpineD[n])$ to be the inclusion, and in order to find the desired extension to the remaining squares of $\Spine[n] \times \Spine[n]$, we need to find successively extensions along inclusions of the kind
\[
\begin{tikzcd}
    \bullet \ar[r] & \bullet \ar[d]\\ & \bullet 
\end{tikzcd}
\quad
\subset 
\quad 
\begin{tikzcd}
 \bullet \ar[r] \ar[d] & \bullet \ar[d] \\ \bullet \ar[r] & \bullet
\end{tikzcd}
\]
But this can clearly be done for any target category, because this inclusion itself admits a retraction after passing to the associated categories. This concludes the construction of $R$. 

To see that $R$ is a retraction, we recall from Lemma \ref{lem:decomposition_ArD} that the inclusion $\SpineD[n] \hookrightarrow \ArD[n]$ becomes an equivalence after double-categorical realization, so that the induced inclusion $\delta(\SpineD[n]) \hookrightarrow \delta(\ArD[n])$ becomes an equivalence after categorical realization. (Indeed $\delta\colon \ssAn\to \sAn$ sends double-categorical equivalences to categorical equivalences because $\Sq$ sends categories to double Segal spaces.) But on $\delta(\SpineD[n])$ the map $R$ was defined to be the identity.  

Thus, the functor $\diag(\ArD[n])\to \Ar[n]$ under consideration is indeed a map of posets which is a bijection on objects. It remains to show that it is full. For this, consider  $(i\leq j)\to (i'\leq j')$ in $\Ar[n]$, i.e., $i\leq i'$ and $j\leq j'$; then this also defines a morphism $(i,j)\to (i', j')$ in $[n]\times [n]$ which gives, under the retraction $[n] \times [n] \to \diag(\ArD[n])$, a desired preimage in $\diag(\ArD[n])$. 
\end{proof}

\begin{remark} 
Using Lemma \ref{lem:diag_ArD}, we find that the colimit decomposition of $\ArD[n]$ from Lemma \ref{lem:decomposition_ArD} induces a colimit decomposition of $\Ar[n]$ by passing to diagonals. Namely, the composite inclusion 
\[\Spine\Ar[n]:=\delta(\SpineD[n])\subset \delta(\ArD[n]) \subset \Ar[n]\]
is a categorical equivalence of simplicial spaces. Explicitly, $\Spine\Ar[n]$ is the subobject of the simplicial space (or set)  $\Ar[n]$ that is spanned by $1$-simplices of the form $(i \leq j) \to (i \leq j+1)$ and $(i \leq j) \to (i+1 \leq j)$, and $2$-simplices of the form
\[
  \begin{tikzcd}
   (i \leq j) \ar[r] \ar[d] & (i \leq j+1) \ar[d]\\
   (i+1 \leq j) \ar[r] & (i+1 \leq j+1).
  \end{tikzcd}
\]
It is also possible to first prove this result directly, by an inductive argument, and then to deduce Lemma \ref{lem:diag_ArD} from this and Lemma \ref{lem:decomposition_ArD}. 
\end{remark}

\smallskip

\begin{proof}[Proof of Theorem \ref{S-constr}]
We will more generally identify the spaces of natural transformations between functors $\ssAn \to \Cat \subset \sAn$:
$$\Hom(\Stair, \diag), \ \Hom(\diag, \diag), \ \text{ and } \Hom(\Stair, \Stair)$$
and show that they are all contractible. This would readily imply the existence of an (essentially unique) natural transformation $\Stair \to \diag$ which is an inverse to $\cat(w)$, as required. 

Since $\Cat \subset \sAn$ is a full reflective subcategory, the canonical natural transformation $\oS \to \Stair$ induces an identification of spaces of natural transformations of functors $\ssAn\to \sAn$
$$\Hom(\Stair, \diag) \simeq \Hom(\oS, \diag).$$
The space of natural transformations $\oS \to \diag$ (viewed as functors on $\Delta\op$) is identified as follows:
\[
\Hom(\oS, \diag) = \lim_{([m]\to [n])\in \Twar(\Delta\op)} \Hom(\oS_n, \diag_m) 
\simeq \lim_{([m]\to [n])\in \Twar(\Delta\op)} \diag_m(\ArD[n])\]
where we have used that $\oS_n$ is represented by $\ArD[n]$ and the Yoneda lemma. Moreover, since $X_m \simeq \Hom_{\sAn}([m], X)$ naturally in $X$ and $[m]\in \Delta$, we can rewrite this last expression  as
\[ 
\lim_{([n]\to [m])\in \Twar(\Delta)} \Hom([m], \diag(\ArD[n])) \simeq \Hom([-], \diag(\ArD[-])) \simeq \Hom([-], \Ar[-])
\]
where we have used Lemmas \ref{lem:diag_ArD} and \ref{lem:diagonal_of_ArD} in the last place; the latter $\Hom$ is the space of natural transformations between cosimplicial objects in categories (in fact, posets). 

Now we note that for any cosimplicial poset $P(-)$, natural transformations $\alpha\colon [-]\to P(-)$ are determined by the individual maps $\alpha_m\colon [m]\to P(m)$, which in turn are determined by their values $\alpha_m(i)$, and the latter are simply determined by the single value of $\alpha$ at $0\in [0]$. 
In addition, the resulting injective map from $\Hom([-], P(-))$ into (the underlying set of) $P(0)$ is a bijection, since any choice of 
$x\in P(0)$ extends to the natural transformation that sends $i\in [m]$ to $P(i\colon [0]\to [m])(x)$. In summary, we conclude that
\[\Hom(\Stair, \diag) \simeq \Hom([-], \Ar[-]) \simeq \Ar[0]\]
is indeed contractible. 
Entirely similar arguments also show canonical equivalences
\[\Hom(\diag, \diag) \simeq \Hom(\delta, \diag) \simeq \Hom([-], \diag([-]\otimes [-]) \simeq \Hom([-], [-]\times [-]) \simeq [0]\times [0]\]
and 
\[\Hom(\Stair, \Stair) \simeq \Hom(\oS, \Stair) \simeq \Hom([-], \Stair(\ArD[-]) \simeq \Hom([-], \Ar[-]) \simeq \Ar[0],\]
where we have used Lemma \ref{lem:diagonal_of_ArD} again. So all these mapping spaces of natural transformations are again contractible as claimed.
(It would not be difficult to write down explicitly the transformation $\oS\to \diag$ using the Yoneda lemma --pointwise--, and then demonstrate the commutativity of the triangles below more explicitly
\begin{equation*}\label{eq:split_in_square}
\begin{tikzcd}
\delta \ar[r, "w"] \ar[d] & \oS \ar[d] \ar[ld, ""']\\
\diag \ar[r, "\cat(w)"'] & \Stair
 \end{tikzcd}
\end{equation*}
using, again, the Yoneda lemma.)
\end{proof}

\begin{proof}[Proof of Theorem \ref{span-constr}]
Combine Theorem \ref{S-constr} with Proposition \ref{prop:basic_adjunction}.
\end{proof}

%%%%%%%%%%%%%%%%%%%%%%%%%%%%%%%%%%%%%%%%%%%%%%%%%%%%%%%%%%%%%%%%
%%%%%%%%%%%%%%%%%%%%%%%%%%%%%%%%%%%%%%%%%%%%%%%%%%%%%%%%%%%%%%%%

\section{Comparison of models for algebraic $K$-theory}\label{double_vs_single_span}

\subsection{Comparison with the $\Q$-construction of exact categories} We now compare our version of the $\Q$-construction and of the span category with the usual $\Q$-construction of an exact category \cite{Qui, Bar}. 

For an exact ($\infty$-)category $(\C, \C_\dagger, \C^\dagger)$ in the sense of \cite{Bar, Bar2}, its $\Q$-construction is defined as the $\Q$-construction of this adequate triple. Thus, using Remark \ref{rem:usual_Q}, we conclude that this agrees with the $\Q$-construction of the double category 
\[\Sq(\C, \C_\dagger, \C^\dagger, \cart) \subset \Sq(\C).\]
As a consequence of Theorem \ref{S-constr}, we deduce the following agreement of models for the algebraic $K$-theory of exact categories: via the $\Q$-construction and via
an $\infty$-categorical generalization of Thomason's construction \cite[end of Section 1.3]{Wal}/squares $K$-theory (see \cite{CKMZ}). 

\begin{corollary}
For any exact category $(\C, \C_\dagger, \C^\dagger)$, we have 
\[\vert \Q(\C, \C_\dagger, \C^\dagger) \vert \simeq \vert \Sq(\C, \C_\dagger, \C^\dagger, \cart)\vert.\]
\end{corollary}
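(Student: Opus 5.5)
The plan is to chain together equivalences already established, so that $|\Q(\C,\C_\dagger,\C^\dagger)|$ and $|\Sq(\C,\C_\dagger,\C^\dagger,\cart)|$ are both identified with the geometric realization of a single diagonal category. Write $\D:=\Sq(\C,\C_\dagger,\C^\dagger,\cart)\subset\Sq(\C)$, a bisimplicial space. The steps are as follows.

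First, by Remark \ref{rem:usual_Q} there is a natural equivalence of simplicial spaces $\Q(\C,\C_\dagger,\C^\dagger)\simeq\Q(\D)$. Hence the left-hand side is $|\Q(\D)|$.

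Second, using \eqref{S-vs-Q} we have $\Q(\D)\simeq\oS(\D\pop2)$. Theorem \ref{S-constr}, applied to the bisimplicial space $X=\D\pop2$, says that $w_X\colon\delta(\D\pop2)\to\oS(\D\pop2)$ becomes an equivalence after categorical realization, and therefore also after geometric realization. Geometric realization of a simplicial space factors through $\cat$, since $|\cat(Y)|\simeq|Y|$. This gives
\[ |\Q(\D)|\simeq|\delta(\D\pop2)|. \]

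Third, we identify $|\delta(\D\pop2)|$ with $|\D|$, where $|\D|$ denotes the colimit over $\Delta\op\times\Delta\op$. Two standard facts combine here.
\begin{itemize}
\item The colimit of a bisimplicial space over $\Delta\op\times\Delta\op$ agrees with the colimit of its diagonal, because the diagonal $\Delta\op\to\Delta\op\times\Delta\op$ is cofinal ($\Delta\op$ is sifted). So $|\delta(Y)|\simeq|Y|$ for any $Y\in\ssAn$.
\item Taking the opposite in the second direction does not change the realization, $|Y\pop2|\simeq|Y|$. This holds because the involution $[n]\mapsto[n]\op$ of $\Delta$ induces an automorphism of $\Delta\op$, and $\D\pop2$ is precomposition with it in the second variable; colimits are invariant under reindexing along an automorphism of the indexing category.
\end{itemize}
Chaining these gives $|\Q(\C,\C_\dagger,\C^\dagger)|\simeq|\Q(\D)|\simeq|\delta(\D\pop2)|\simeq|\D\pop2|\simeq|\D|$.

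The only point needing care is this last step: one must use the sifted-colimit (diagonal cofinality) statement in the $\infty$-categorical setting and check that the op-involution is legitimately an automorphism of the indexing category. Both are standard, so I expect no real obstacle. The substantive input is Theorem \ref{S-constr} together with Remark \ref{rem:usual_Q}.
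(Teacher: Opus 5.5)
Your proposal is correct and follows the paper's own route. You identify $\Q(\C,\C_\dagger,\C^\dagger)$ with $\Q(\Sq(\C,\C_\dagger,\C^\dagger,\cart))$ via Remark \ref{rem:usual_Q}, then apply Theorem \ref{S-constr} to $X=\D\pop2$ (using $\oS(X)\simeq\Q(X\pop2)$) together with $|\delta(X)|\simeq|X|\simeq|X\pop2|$; this is exactly the chain the paper packages as Corollary \ref{main_cor}, and your justifications (cofinality of the diagonal of the sifted category $\Delta\op$, invariance under the op-involution of $\Delta$) are the standard facts the paper leaves implicit.
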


\subsection{Comparison with the $\oS$-construction of Waldhausen categories} We now compare our version of the $\oS$-construction and of the step category with usual $\oS$-construction of (unpointed) Waldhausen categories \cite{Wal, Bar4}. 

Let $(\C, \co(\C))$ be an unpointed Waldhausen category, by which we mean an ($\infty$-)category $\C$ equipped with a wide subcategory $\co(\C)$ that is closed under cobase change in $\C$. We consider the double Segal subspace 
\[\Sq(\C, \co(\C), \C, \cocart)\subset \Sq(\C)\]
which contains only the $(p,q)$-simplices $X\colon [p]\otimes [q]\to \Sq(\C)$ that send each horizontal morphism to  $\co(\C)$, and each square $\{i < j\}\times \{k < l\}$ to a cocartesian square. This may be viewed as an $\infty$-categorical generalization of Thomason's construction (see \cite[end of Section 1.3]{Wal}). 

On the other hand, we also define an \emph{unpointed} $\oS$-construction $\oSun(\C, \co(\C))$ as follows. $\oSun(\C, \co(\C))$ is a simplicial space whose $n$-simplices are given by the space of functors $A\colon \Ar[n]\to \C$ required to satisfy the usual conditions of the $\oS$-construction \cite[Section 1.3]{Wal}, except for the condition $A_{i\leq i}=*$. If $\C$ has a zero object $\ast$, we denote by $\oS(\C, \co(\C)) \subset \oSun(\C, \co(\C))$ the simplicial subspace of functors $A \colon \Ar[n] \to \C$, $n \geq 0$, such that, in addition, $A_{i \leq i} = \ast$ holds. % -- this is an analogue of the $\os$-construction \cite{Wal}.

\begin{proposition}\label{S_is_S}
The $\oS$-construction of $\Sq(\C, \co(\C), \C, \cocart)$ agrees with the unpointed $\oS$-construction  $\oSun(\C, \co(\C))$. 
\end{proposition}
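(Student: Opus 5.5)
Since $\Sq(\C, \co(\C), \C, \cocart)$ is a bisimplicial subspace of $\Sq(\C)$ cut out by conditions on horizontal edges and squares, we first compute $\oS(\Sq(\C))$ and then identify which simplices satisfy the conditions. By the adjunction $(\delta,\Sq)$ from the proof of Proposition \ref{prop:basic_adjunction}, we have
\[\oS(\Sq(\C))_n = \Hom_{\ssAn}(\ArD[n], \Sq(\C)) \simeq \Hom_{\sAn}(\delta(\ArD[n]), \C) \simeq \Hom_{\Cat}(\diag(\ArD[n]), \C) \simeq \Hom_{\Cat}(\Ar[n], \C),\]
naturally in $[n]$. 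The last step uses Lemma \ref{lem:diag_ArD} together with Lemma \ref{lem:diagonal_of_ArD}; concretely, the map is restriction along the inclusion $\delta(\ArD[n]) \subset \Ar[n]$, which is a categorical equivalence. So $\oS(\Sq(\C))_n$ is the space of all functors $A\colon \Ar[n]\to\C$. Since $\Sq(\C,\co(\C),\C,\cocart)\to \Sq(\C)$ is levelwise an inclusion of components, $\oS$ of the subobject is the union of components of $\oS(\Sq(\C))_n$ consisting of those $A$ whose restriction to every bisimplex of $\ArD[n]$ satisfies the defining conditions. Naturality in $n$ is automatic.

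It remains to match conditions. A bisimplex $[p]\otimes[q]\to\ArD[n]$ satisfies the conditions if and only if its horizontal edges and its $(1,1)$-subsquares do. So the conditions on $A$ are:
\begin{enumerate}
\item[(a)] $A$ sends every horizontal morphism $(i\leq j)\to(i\leq j')$ of $\ArD[n]$ to $\co(\C)$;
\item[(b)] $A$ sends every square $(i\leq j)\to (i\leq j')$, $(i'\leq j)\to(i'\leq j')$ with $i<i'\leq j<j'$ to a cocartesian square.
\end{enumerate}
Waldhausen's conditions for $\oSun$ (without $A_{i\leq i}=*$) ask that each $A_{i\leq j}\to A_{i\leq j'}$ be a cofibration and that each square $A_{i\leq j}, A_{i\leq j'}, A_{i'\leq j}, A_{i'\leq j'}$ with $i\leq i'\leq j\leq j'$ be cocartesian. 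This is the same list as (a) and (b), up to the degenerate squares. When $i=i'$ or $j=j'$ the square has two parallel identities, so it is automatically cocartesian. Hence the two subspaces of $\Fun(\Ar[n],\C)^{\simeq}$ coincide.

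The main obstacle is the index bookkeeping in the last step. The squares of $\ArD[n]$ are the maps $[1]\otimes[1]\to\ArD[n]$, i.e.\ functors $[1]\ast[1]\to[n]$ with values $i\leq i'\leq j\leq j'$, where the first join factor gives the rows. These must be exactly the squares of Waldhausen's condition, with the horizontal direction matching cofibrations and not the quotient maps. This has to be checked against the convention $\ArD(\C)_{p,q}=\Hom([p]\ast[q],\C)$ and the identification $\delta(\ArD[n])\subset\Ar[n]$ used in Lemma \ref{lem:diag_ArD}.
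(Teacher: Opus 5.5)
Your proof is correct and follows the paper's route. The paper's proof just says to repeat the argument of Remark~\ref{rem:usual_Q}: identify $\oS(\Sq(\C))_n$ via the adjunction $(\delta,\Sq)$ with functors $\diag(\ArD[n])\simeq\Ar[n]\to\C$, then match the defining conditions, which is what you carry out in more detail.

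One small addition: if Waldhausen's conditions are read literally as requiring only the squares with $i'=j$ (those with corner $A_{j\leq j}$) to be cocartesian, you still get the general squares with $i\leq i'\leq j\leq j'$. They follow by horizontal pasting of the two such squares along $A_{i\leq i'}\to A_{i\leq j}\to A_{i\leq j'}$.
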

\begin{proof} 
This is the same as the argument from Remark \ref{rem:usual_Q}, but using the identification $\diag (\ArD(\C)) \simeq \Ar(\C)$ of Lemma \ref{lem:diagonal_of_ArD} instead of the (trivial) identification $\diag(\TwarD(\C))\simeq \Twar(\C)$.
\end{proof}

As a consequence of Theorem \ref{S-constr}, we deduce the following agreement of models for the algebraic $K$-theory of (unpointed) Waldhausen categories: via the (unpointed) $\oS$-construction and the 
$\infty$-categorical generalization of Thomason's construction/squares $K$-theory.

\begin{corollary}\label{unpointedS-vs-Sq}
For any unpointed Waldhausen category $(\C, \co(\C))$, we have 
\[\vert \oSun(\C, \co(\C))\vert \simeq \vert \Sq(\C, \co(\C), \C, \cocart)\vert.\]
\end{corollary}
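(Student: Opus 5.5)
The corollary follows by chaining three equivalences of geometric realizations; I write $\D:=\Sq(\C, \co(\C), \C, \cocart)$ throughout.

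First, $\D$ is a double Segal subspace of $\Sq(\C)$, so it is in particular a bisimplicial space. Theorem \ref{S-constr} then says that $w_{\D}\colon \delta(\D)\to \oS(\D)$ becomes an equivalence after categorical realization. Geometric realization $\vert-\vert\colon \sAn\to \An$ factors through $\cat$, since $\An\subset\Cat$ is reflective and $\vert-\vert$ is the composite of the reflections. Hence $\vert w_{\D}\vert\colon \vert\delta(\D)\vert\to\vert\oS(\D)\vert$ is an equivalence.

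Second, Proposition \ref{S_is_S} identifies the right-hand side: it gives $\oS(\D)\simeq \oSun(\C,\co(\C))$, so $\vert\oS(\D)\vert\simeq\vert\oSun(\C,\co(\C))\vert$.

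Third, I have to identify $\vert\delta(\D)\vert$ with $\vert\D\vert$. Here I interpret $\vert\D\vert$, for a bisimplicial space, as the colimit over $\Delta\op\times\Delta\op$. The standard fact that this colimit is computed by the diagonal comes from cofinality of $\Delta\op\to\Delta\op\times\Delta\op$: the diagonal of $\Delta\op$ is (co)final because $\Delta\op$ is sifted. This gives $\vert\D\vert\simeq\colim_{\Delta\op}\delta(\D)=\vert\delta(\D)\vert$.

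Composing the three equivalences gives the claim. The only point needing care is to be explicit that $\vert\D\vert$ means the realization of the underlying bisimplicial space, equivalently of its diagonal; once that convention is fixed, the siftedness of $\Delta\op$ makes it routine. Everything else is a formal consequence of Theorem \ref{S-constr} and Proposition \ref{S_is_S}.
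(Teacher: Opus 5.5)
Your proof is correct and follows the route the paper intends. The paper states this corollary as a direct consequence of Theorem~\ref{S-constr} applied to $\D=\Sq(\C,\co(\C),\C,\cocart)$, together with the identification $\oS(\D)\simeq\oSun(\C,\co(\C))$ of Proposition~\ref{S_is_S} and the standard equivalence $\vert\D\vert\simeq\vert\delta(\D)\vert$ (the first equivalence in Corollary~\ref{main_cor}); your siftedness argument for $\Delta\op$ fills in exactly the step the paper leaves implicit.
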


In addition, we also obtain agreement with the usual (pointed) $\oS$-construction. 

\begin{proposition}
For any (pointed) Waldhausen category $(\C, \co(\C))$, the inclusion of the usual (pointed) $\oS$-construction is an equivalence after geometric realization:
\[ \vert \oS(\C, \co(\C))\vert  \xrightarrow\simeq  \vert \oSun(\C, \co(\C))\vert.\]   
\end{proposition}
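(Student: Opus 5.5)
We prove the statement with a simplicial homotopy argument, working directly with the two simplicial spaces. Write $\oS := \oS(\C,\co(\C))$ and $\oSun := \oSun(\C,\co(\C))$. An $n$-simplex $A\colon \Ar[n]\to \C$ of $\oSun$ is determined, up to contractible choice, by its restriction to the diagonal $A_{0\le 0},\dots,A_{n\le n}$ together with the ``filtration'' data. Under the cocartesianness conditions, the full diagram is left Kan extended from the diagonal and the first row $A_{0\le 0}\to A_{0\le 1}\to\cdots\to A_{0\le n}$. So we compare $\oSun$ with a simplicial space built from cofibration sequences, and $\oS$ with the locus where the diagonal values are zero.

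The plan has three steps. First, we show that the forgetful map $\oSun\to \oSun_0^{\times}$, sending $A$ to its diagonal, behaves well. More precisely, consider the map $\oSun\to N(\iota\C)$ (or more simply a map to a simplicial space whose $n$-simplices are $A_{0\le 0}$). For each simplex, the ``zeroth vertex'' value $A_{0\le 0}$ is functorial: under $d_0$ it changes, but there is a natural map $A_{0\le 0}\to A_{1\le 1}$ in $\co(\C)$. Second, we construct a simplicial homotopy contracting the diagonal data. Given $A$, define $A'_{i\le j} := A_{i\le j}\cup_{A_{0\le 0}} *$ (pushout along $A_{0\le 0}\to *$, which exists since $\C$ is pointed and $\co(\C)$ is closed under cobase change; we also need $A_{0\le 0}\to *$ to be a cofibration, the standard Waldhausen axiom $*\to X$ cofibrant). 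This does not yet kill $A_{i\le i}$ for $i>0$.

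So, third, we instead use the standard ``edgewise'' trick. Following Waldhausen's comparison of $s_\bullet$ with the unpointed version, we identify $\oSun_n$ with $\oS_{n+1}$-type data: an $n$-simplex $A$ of $\oSun$ corresponds to the diagram $B_{i\le j}$ on $\Ar[n+1]$ given by $B_{0\le j+1}=A_{0\le j}$, with $B_{i+1\le j+1}=A_{i\le j}/A_{0\le 0}$, after adjoining an extra vertex with value $*$. Concretely, $\oSun \simeq$ the total décalage-like object $\oS\circ(\,[0]\ast -\,)$ restricted suitably. More cleanly, I would prove the statement via Theorem \ref{S-constr} and Corollary \ref{unpointedS-vs-Sq}: both $|\oS|$ and $|\oSun|$ map to $|\Sq(\C,\co(\C),\C,\cocart)|$. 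We then check that the inclusion of the double subcategory $\D_*\subset \Sq(\C,\co(\C),\C,\cocart)$ has the following properties. The vertical objects are arbitrary, but squares are anchored so that the pointed $\oS$ equals $\oS(\D_*)$. This inclusion is a diagonal-category equivalence on geometric realization. The equivalence is exhibited by the natural transformation $X\to X/*$...

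This route is murky. I would therefore commit to the direct approach: show that the map $\oS_n\to \oSun_n$ admits a homotopy retraction after realization, via a natural transformation of simplicial maps. The key construction is the simplicial map $q\colon \oSun\to \oS$ with
\[
q(A)_{i\le j}=A_{i\le j}\cup_{A_{i\le i}} *,
\]
which is well defined because $A_{i\le i}\to A_{i\le j}$ is a cofibration and the cocartesian squares are preserved. Then $q\circ \mathrm{incl}\simeq \id$ holds on the nose up to the canonical equivalence $X\cup_* *\simeq X$.

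For the other composite, the unit maps $A\to q(A)$ form a natural transformation of functors $\Ar[n]\to\C$. This gives a map $\oSun\times\Delta^1\to\oSun$ in the usual way: it uses $\Ar[n]\times[1]$ and pulls back along $[n]\to[1]$, as in the proof of Proposition \ref{prop:basic_adjunction}. The main obstacle is checking that the intermediate diagrams built from $A$ and $q(A)$ by this prescription still satisfy the $\oSun$ conditions: cofibrations along rows and cocartesian squares. This reduces to pasting of pushouts. The natural transformation $A\to q(A)$ is itself a cocartesian diagram, since the diagram $A_{i\le i}\to A_{i\le j}$ is pushed out to $*\to q(A)_{i\le j}$. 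Given this, $|\mathrm{incl}|$ is a homotopy equivalence.
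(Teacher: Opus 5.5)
Your committed argument (the last two paragraphs) is correct, and it follows a different route from the paper.

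\paragraph{How the routes differ.} The paper never works with $\oSun$ directly. It invokes Corollary~\ref{unpointedS-vs-Sq}, and hence the main Theorem~\ref{S-constr}, to identify $\vert\oSun(\C,\co(\C))\vert$ with $\vert\Sq(\C,\co(\C),\C,\cocart)\vert$. It then runs Waldhausen's comparison with Thomason's construction. In each fixed horizontal degree $n$, the space $\oS_n$ of strings of cofibrations starting at $*$ includes into the vertical realization $\vert\Sq(\C,\co(\C),\C,\cocart)_{\bullet n}\vert$ as an equivalence, with inverse $c_\bullet\mapsto c_\bullet/c_0$.

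You use the same ``quotient by the initial term'' idea, but directly inside the $\oS$-model. You set $q(A)=A\cup_{A\circ\pi}*$ with $\pi(i\le j)=(i\le i)$. This is a pushout in $\Fun(\Ar[n],\C)$ along a pointwise cofibration, and it is visibly natural in $[n]$ because $\pi$ commutes with $\Ar(\theta)$. You then exhibit $\oS$ as a simplicial deformation retract of $\oSun$.

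What each buys:
\begin{itemize}
\item Your argument is more elementary and self-contained: it does not use Theorem~\ref{S-constr} at all.
\item It gives the stronger conclusion of a simplicial homotopy equivalence.
\item It is about the inclusion map itself, rather than an identification of both sides with $\vert\Sq(\C,\co(\C),\C,\cocart)\vert$.
\item The paper's route, in exchange, yields the comparison $\vert\oS\vert\simeq\vert\Sq(\C,\co(\C),\C,\cocart)\vert$ with squares $K$-theory along the way, which is the point of that section.
\end{itemize}

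\paragraph{Points to make explicit.} First, the homotopy must be indexed by the source coordinate: $H(A,\alpha)=A^{\to}\circ(\id,\alpha\circ m)$ with $m(i\le j)=i$. This is the $\min$ from Proposition~\ref{prop:basic_adjunction}, so your reference to it points the right way, but say it.
\begin{itemize}
\item If you used $\alpha(j)$ instead, rows would contain composites $A_{i\le j}\to A_{i\le k}\to A_{i\le k}/A_{i\le i}$. These are not cofibrations in general, for example for pointed finite sets with injections.
\item With the source index, each row lies entirely in $A$ or entirely in $q(A)$.
\item A mixed square, with $\alpha(i)=0$ and $\alpha(i')=1$, is the vertical pasting of two squares. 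The first is the cocartesian square of $A$ on $i\le i'\le j\le k$. The second is the cocartesian square with corners $A_{i'\le j},A_{i'\le k},q(A)_{i'\le j},q(A)_{i'\le k}$. The pasting is therefore cocartesian, using the valid direction of the pasting lemma.
\end{itemize}

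Second, delete the exploratory opening paragraphs; nothing in them is used. Among other things, $A_{0\le 0}\to *$ is not a cofibration by any Waldhausen axiom; the axiom is that $*\to X$ is a cofibration.
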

\begin{proof}
By Corollary \ref{unpointedS-vs-Sq}, it suffices to show that the geometric realization of the pointed $\oS$-construction is canonically equivalent to the geometric realization of $\Sq(\C, \co(\C), \C, \cocart)$. This is shown in \cite{Wal} for ordinary Waldhausen categories, but the proof carries over to the $\infty$-categorical setting. Note that the space of $n$-simplices $\oS_n(\C, \co(\C))$ is equivalent to the space of $n$-strings of cofibrations starting at $*$. The inclusion of this space into the geometric realization of the vertical direction in fixed horizontal degree $n$:
$$\vert \Sq(\C, \co(\C), \C, \cocart)_{\bullet n}\vert$$
is an equivalence; the inverse functor is specified (on $0$-simplices) by the rule
\[ (c_0 \rightarrowtail \dots \rightarrowtail c_n) \mapsto (* \rightarrowtail c_1/c_0 \rightarrowtail \dots, \rightarrowtail c_n/c_0);\]
this defines a functor $\Sq(\C, \co(\C), \C, \cocart)_{\bullet n} \to \oS_n(\C, \co(\C))$ that extends to the geometric realization.
\end{proof}

\begin{remark}
After some evident modifications, the above results apply also to the case of Waldhausen categories with weak equivalences $(\C, \co(\C), w(\C))$ \cite{Bar4}, where $w(\C) \subset \C$ is a wide subcategory of weak equivalences (satisfying the glue axiom).
\end{remark}

\begin{remark}
Similar arguments apply to variations of the $\oS$-construction for other choices of `distinguished squares' instead of (co)cartesian ones. This is especially relevant for the case of the $K$-theory of higher homotopy categories where the distinguished squares are the respective higher weak pushouts (see \cite{Ra}). 
\end{remark}

\subsection{Comparison with the cobordism model for Waldhausen categories} We also discuss the connection with the ($\infty$-categorical extension of the) cobordism category model for $\K$-theory \cite{RS}. This may defined using the language of the present note 
as 
\[\Cob(\C, \co(\C)) := \vert \Span(\Sq(\C\op, \co(\C)\op, \C\op, \cocart\op)\vert\]
where the last condition means that we allow only cartesian squares in $\C\op$, i.e., cocartesian squares in $\C$. Thus, as a consequence of Theorem \ref{S-constr}, we obtain the following generalization of the main result of \cite{RS} (with a different proof). 

\begin{corollary}
For an unpointed Waldhausen $\infty$-category $(\C, \co(\C))$ we have 
\[\vert \Cob(\C, \co(\C)) \vert \simeq  \vert\oSun(\C, \co(\C))\vert.\]
\end{corollary}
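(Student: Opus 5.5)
The plan is to reduce the statement to Corollary~\ref{unpointedS-vs-Sq} by identifying the cobordism model with the squares model through Theorem~\ref{S-constr} and the duality \eqref{S-vs-Q}. Set $\D := \Sq(\C\op, \co(\C)\op, \C\op, \cocart\op)$. Since realization commutes with taking the categorical realization of a simplicial space, the definition gives $\vert\Cob(\C,\co(\C))\vert = \vert\Span(\D)\vert \simeq \vert \Q(\D)\vert$. By \eqref{S-vs-Q} we have $\Q(\D)\simeq \oS(\D\pop2)$. Theorem~\ref{S-constr} then gives a categorical equivalence $\delta(\D\pop2)\to \oS(\D\pop2)$, hence an equivalence of geometric realizations:
\[ \vert\Cob(\C,\co(\C))\vert \simeq \vert \delta(\D\pop2)\vert .\]

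The next step is to identify $\D\pop2$ with the Thomason-type double category $\Sq(\C,\co(\C),\C,\cocart)$, up to operations that do not change the geometric realization of the diagonal. Since $\Sq(\C\op)_{p,q} = \Hom([p]\times[q],\C\op) = \Hom(([p]\times[q])\op,\C)$, we get a canonical identification $\Sq(\C\op)\simeq \Sq(\C)\pop{1,2}$, meaning opposite in both directions. It restricts to the subobjects: horizontal edges in $\co(\C)\op$ correspond to horizontal edges in $\co(\C)$, vertical edges are unrestricted, and squares cartesian in $\C\op$ correspond to squares cocartesian in $\C$. Consequently
\[ \D\pop2 \simeq \Sq(\C,\co(\C),\C,\cocart)\pop1 .\]
The diagonal of a bisimplicial space that is opposite in one direction is not the opposite of the diagonal, so this needs care.

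To deal with the remaining $\pop1$, I would apply Theorem~\ref{S-constr} a second time, together with \eqref{S-vs-Q}. Write $E:=\Sq(\C,\co(\C),\C,\cocart)$. Then
\[ \vert\delta(E\pop1)\vert \simeq \vert \oS(E\pop1)\vert \simeq \vert \Q(E\pop{1,2})\vert ,\]
and taking opposites in both directions reverses the diagonal, which does not change geometric realization. It is cleaner, though, to compare directly with $E$. Observe that $\Q(E\pop{1,2})$ is the reversal of $\Q(E)$, because $\TwarD[n]\pop{1,2}\simeq \TwarD([n]\op)$, and geometric realization is invariant under reversal of simplicial spaces. Using Theorem~\ref{S-constr} once more gives $\vert\Q(E)\vert=\vert\oS(E\pop2)\vert\simeq\vert\delta(E\pop2)\vert$. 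Along the same lines, $\vert\delta(E\pop1)\vert\simeq\vert\delta(E\pop2)\vert$, since the two differ by a total opposite. What is left is to reach $\vert\delta(E)\vert$ itself.

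I expect this bookkeeping of opposites to be the main obstacle. Total opposites are harmless, but a single-direction opposite genuinely changes the diagonal. The plan is to rearrange the chain so that exactly one application of \eqref{S-vs-Q} absorbs the single $\pop2$. Concretely, I would aim for
\[ \vert\Cob(\C,\co(\C))\vert = \vert\Q(\D)\vert \simeq \vert\Q(\D)^{\mathrm{rev}}\vert \simeq \vert\Q(\D\pop{1,2})\vert = \vert \Q(E\pop2)\vert = \vert\oS(E)\vert ,\]
where $E\pop2$ arises as $\D\pop{1,2}$ after the identification above. By Proposition~\ref{S_is_S}, $\oS(E)\simeq\oSun(\C,\co(\C))$, which gives the claim directly; alternatively one can combine with Corollary~\ref{unpointedS-vs-Sq}. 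The key check in this chain is the identity $\D\pop{1,2}\simeq E\pop2$, that is, $\D\simeq E\pop1$. I would verify it carefully on $(1,0)$-, $(0,1)$- and $(1,1)$-simplices, matching which direction carries $\co(\C)$ and whether the squares are cartesian or cocartesian.
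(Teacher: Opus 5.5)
Your first two paragraphs are correct and follow the paper's route: $\vert\Cob(\C,\co(\C))\vert\simeq\vert\Q(\D)\vert\simeq\vert\delta(\D\pop2)\vert$ by \eqref{S-vs-Q} and Theorem \ref{S-constr}, and $\D\simeq E\pop{1,2}$, i.e.\ $\D\pop2\simeq E\pop1$. The gap is at the step you call the main obstacle. It is not actually an obstacle, and the workaround you build around it is wrong. The fact you are missing is this. For any bisimplicial space $Y$ one has $\vert\delta(Y)\vert\simeq\vert Y\vert=\operatorname{colim}_{\Delta\op\times\Delta\op}Y$, because the diagonal is cofinal ($\Delta\op$ is sifted). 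Moreover, $Y\pop1$ is $Y$ precomposed with an automorphism of $\Delta\op\times\Delta\op$, so $\vert Y\pop1\vert\simeq\vert Y\vert$. A one-sided opposite does change $\delta(Y)$ and $\diag(Y)$, but it does not change the realization. The paper uses exactly this: it invokes $\vert X\vert\simeq\vert X\pop2\vert$ in Corollary \ref{main_cor} to get $\vert\Q(\D)\vert\simeq\vert\D\vert$, and then $\vert\D\vert\simeq\vert E\vert$ because $\D\simeq E\pop{1,2}$. With this fact your argument closes immediately: $\vert\delta(E\pop1)\vert\simeq\vert E\pop1\vert\simeq\vert E\vert\simeq\vert\oSun(\C,\co(\C))\vert$ by Corollary \ref{unpointedS-vs-Sq}.

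The chain you propose instead rests on two false identifications.

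First, $\Q(X\pop{1,2})$ is not the reversal of $\Q(X)$. A $1$-simplex of $\Q(X\pop{1,2})$ is a cospan in $X$, i.e.\ a horizontal and a vertical morphism with common target, whereas a $1$-simplex of $\Q(X)$ is a span. The correct statement comes from $([p]\ast[q]\op)\op=[q]\ast[p]\op$, which gives $\TwarD([n]\op)_{p,q}\cong\TwarD[n]_{q,p}$. So reversing $\Q(X)$ corresponds to interchanging the two simplicial directions of $X$, not to $X\pop{1,2}$.

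Second, your ``key check'' $\D\simeq E\pop1$ contradicts your own correct identification $\D\simeq E\pop{1,2}$. The horizontal morphisms of $\D$ form $\co(\C)\op$, while those of $E\pop1$ form $\co(\C)$.

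In fact no chain of this kind can succeed, since it would produce a levelwise equivalence $\Q(\D)^{\mathrm{rev}}\simeq\oS(E)$. On one side, $\Q(\D)$ is a Segal space (Corollary \ref{Segal_condition_for_S}(2), using pushouts along cofibrations), and its $1$-simplices are cospans $c_0\rightarrowtail w\leftarrow c_1$. On the other side, $\oS(E)$ has $1$-simplices $c\rightarrowtail d\to e$ and is in general not a Segal space. For example, take finite sets with all maps as cofibrations: the vertical map $\{a,b\}\to\ast$ followed by the horizontal $\mathrm{id}_\ast$ extends to a cocartesian square in two inequivalent ways. The cobordism category and the step category really are different, and only their realizations agree. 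So a realization-level input like the one above is necessary, not a simplicial identification.
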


\begin{proof}
 In view of the previous corollary and Theorem \ref{S-constr}, it suffices to prove 
 \[\vert \Sq(\C\op, \co(\C)\op, \C\op, \cocart\op) \vert \simeq \vert \Sq(\C, \co(\C), \C, \cocart)\vert.\]
Indeed the double Segal space on the left is just the opposite (in both directions) of the second, so their geometric realizations agree. 
\end{proof}

\subsection{Squares $K$-theory} In summary, using the equivalences for any $X \in \ssAn$ (Theorem \ref{S-constr})
$$\diag(X) \simeq \Stair(X) \simeq \Span(X\pop2),$$ 
combined with $\vert X \vert \simeq \vert X^{\pop2}\vert$, we conclude:

\begin{corollary}\label{main_cor}
For any $X \in \ssAn$, there are natural equivalences of spaces
\[\vert X \vert \simeq \vert \delta(X) \vert \simeq \vert \oS(X)\vert \simeq \vert\Q(X)\vert. \]
\end{corollary}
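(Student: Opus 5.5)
The chain of equivalences will be assembled from four links, each either already proved or a standard fact about geometric realization.

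\emph{First link: $\vert X\vert \simeq \vert \delta(X)\vert$.} Here $\vert X\vert$ denotes the colimit of $X$ over $\Delta\op\times\Delta\op$. This equivalence is the classical fact that the diagonal $\Delta\op\to\Delta\op\times\Delta\op$ is cofinal (final). The reason is that $\Delta\op$ is sifted, so restricting along the diagonal does not change colimits. Hence
\[\operatorname{colim}_{\Delta\op\times\Delta\op} X \simeq \operatorname{colim}_{\Delta\op}\delta(X),\]
naturally in $X$.

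\emph{Second link: $\vert\delta(X)\vert\simeq\vert\oS(X)\vert$.} Geometric realization $\sAn\to\An$ factors through $\cat$, because $\vert-\vert$ is left adjoint to $\An\subset\sAn$ and $\An\subset\Cat\subset\sAn$. So $\vert Y\vert\simeq\vert\cat(Y)\vert$ for every $Y\in\sAn$, and every categorical equivalence is a geometric equivalence. Theorem \ref{S-constr} says that $w_X$ is a categorical equivalence. Therefore $\vert w_X\vert\colon\vert\delta(X)\vert\to\vert\oS(X)\vert$ is an equivalence, natural in $X$.

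\emph{Third link: $\vert\oS(X)\vert\simeq\vert\Q(X)\vert$.} Apply the first two links to $X\pop2$ and use \eqref{S-vs-Q}, which gives
\[\vert\Q(X)\vert\simeq\vert\oS(X\pop2)\vert\simeq\vert\delta(X\pop2)\vert\simeq\vert X\pop2\vert .\]
It remains to show $\vert X\pop2\vert\simeq\vert X\vert$. The reversal involution $\Delta\to\Delta$, $[n]\mapsto[n]$ with $i\mapsto n-i$, is an automorphism of $\Delta$. Precomposing the second variable with it leaves the colimit over $\Delta\op\times\Delta\op$ unchanged, since colimits are invariant under reindexing along an equivalence of indexing categories. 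Chaining these gives $\vert\Q(X)\vert\simeq\vert X\vert\simeq\vert\oS(X)\vert$.

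The only substantive input is Theorem \ref{S-constr}. Beyond it, the one point needing care is naturality. All the identifications are natural in $X$: the cofinality equivalence, $\vert w_X\vert$, the identification \eqref{S-vs-Q}, and the reversal automorphism. Consequently the composite equivalences are natural as well. I expect no real obstacle; the most delicate step is stating the siftedness/cofinality argument precisely for $\infty$-categorical colimits.
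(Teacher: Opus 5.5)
Your proposal is correct and follows the paper's own argument. The paper likewise combines Theorem \ref{S-constr} (passed from categorical to geometric realization), the identification \eqref{S-vs-Q} applied to $X\pop2$, and the invariance $\vert X\vert\simeq\vert X\pop2\vert$; it leaves the standard diagonal equivalence $\vert X\vert\simeq\vert\delta(X)\vert$ implicit, which you justify explicitly via siftedness of $\Delta\op$.
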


In view of the previous comparison results of models for algebraic $K$-theory, we may define $\K$-theory in the general setting of pointed double Segal spaces (or just bisimplicial spaces) as 
\[\K= \Omega \vert - \vert \colon \DSeg_* \to \An.\]
Indeed, this definition is a double $\infty$-categorical generalization of squares $K$-theory \cite{CKMZ}. Double $\infty$-categories seem to be a natural setting for $K$-theory because the abstract datum of a square enables us to formulate an inclusion-exclusion principle. Special instances of squares $K$-theory have recently been studied for ordinary (double) categories with weaker categorical properties than exact or Waldhausen categories or with different decomposition conditions, such as varieties (see e.g. \cite{Cam}) and manifolds \cite{HMMRS, MRS}.


\begin{thebibliography}{10}

\bibitem{Ara}
K.~Arakawa, \emph{Classification Diagrams of Marked Simplicial Sets}, arXiv (2023). \url{https://arxiv.org/abs/2311.01101}

\bibitem{BDA} 
P.~Balmer and I.~Dell’Ambrogio, \emph{Mackey $2$-functors and Mackey $2$-motives}, EMS Monogr. Math., Z\"urich, European Mathematical Society (EMS), 2020.

\bibitem{Bar}
C.~Barwick, \emph{On the Q-construction for exact quasicategories}, arXiv (2013). \url{https://arxiv.org/abs/1301.4725}

\bibitem{Bar2}
C.~Barwick, \emph{On exact $\infty$-categories and the Theorem of the Heart}, Compos. Math. \textbf{151} (2015), no. 11, 2160--2186.

\bibitem{Bar4}
C.~Barwick, \emph{On the algebraic $K$-theory of higher categories}, J. Topol. \textbf{9} (2016), no.~1, 245--347.

\bibitem{Bar3} 
C.~Barwick, \emph{Spectral Mackey functors and equivariant algebraic $K$-theory. I.}, Adv. Math. \textbf{304} (2017), 646--727.

\bibitem{Cam}
J.~A.~Campbell, \emph{The $K$-theory spectrum of varieties}, Trans. Amer. Math. Soc. \textbf{371} (2019), no. 11, 7845--7884.

\bibitem{CKMZ}
J.~Campbell, J.~Kuijper, M.~Merling, and I.~Zakharevich, \emph{Algebraic $K$-theory for squares categories}, Ann. K-Th. \textbf{11} (2026), no. 1, 1--36.


\bibitem{CCL}
B.~Cnossen, T.~Lenz, and S.~Linskens, \emph{Parametrized higher semiadditivity and the universality of spans}, arXiv (2024). 
\url{https://arxiv.org/abs/2403.07676}

\bibitem{CCL2}
B.~Cnossen, T.~Lenz, and S.~Linskens, \emph{Universality of span $2$-categories and the construction of $6$-functor formalisms}, arXiv (2025). 
\url{https://arxiv.org/abs/2505.19192}


%\bibitem{FMa}
%W.~Fulton and R.~ MacPherson, \emph{Categorical framework for the study of singular spaces}, Mem. Amer. Math. Soc. \textbf{31} (1981), no. 243.

\bibitem{GR} 
 D.~Gaitsgory and N.~Rozenblyum, \emph{A study in derived algebraic geometry. Volume I: Correspondences and duality}, Math. Surv. Monogr., Vol. 221, Providence, RI, American Mathematical Society (AMS), 2017.

\bibitem{Harpaz}
Y.~Harpaz, \emph{Ambidexterity and the universality of finite spans}, Proc. of the London Math. Soc. \textbf{121} (2020), no. 5, 1121--1170.

\bibitem{Hau} 
R.~Haugseng, \emph{Iterated spans and classical topological field theories}, Math. Z. \textbf{289} (2018), no. 3, 1427--1488.

\bibitem{HHLN} 
R.~Haugseng, F.~Hebestreit, S.~Linskens, and J.~Nuiten, \emph{Two-variable fibrations, factorisation systems and $\infty$-categories of spans}, Forum Math. Sigma \textbf{11} (2023), no. e111, 1--70.


\bibitem{HS}
F.~Hebestreit and J.~Steinebrunner, \emph{A short proof that Rezk’s nerve is fully faithful}, Int. Math. Res. Not. IMRN \textbf{2025} (2025), no. 4, rnaf021.

\bibitem{HMMRS}
R.~S.~Hoekzema, M.~Merling, L.~Murray, C.~Rovi, and J.~Semikina, \emph{Cut and paste invariants of manifolds via algebraic $K$-theory}, Topology Appl. \textbf{316} (2022), no. 108105.

\bibitem{JT}
A.~Joyal and M.~Tierney, \emph{Quasi-categories vs. Segal spaces},  In: Categories in algebra, geometry and mathematical physics, Contemp. Math., vol. 431, pp. 277–326. Amer. Math. Soc., Providence, RI (2007).

\bibitem{Juran}
B.~Juran, \emph{On orthogonal factorization systems and double categories}, arXiv (2025). \url{https://arxiv.org/abs/2501.01363}. To appear in J. Pure Appl. Algebra.


\bibitem{LZ} Y.~Liu and W.~Zheng, \emph{Enhanced six operations and base change theorem for higher Artin stacks}, arXiv (2012). \url{https://arxiv.org/abs/1211.5948} (2024)


\bibitem{Mac} 
A.~W.~Macpherson, \emph{A bivariant Yoneda lemma and $(\infty, 2)$-categories of correspondences}, Algebr. Geom. Topol. \textbf{22} (2022), no. 6, 2689--2774.

\bibitem{Man} L.~Mann, \emph{A $p$-adic $6$-functor formalism in rigid-analytic geometry}, arXiv (2022). \url{https://arxiv.org/abs/2206.02022}

\bibitem{MRS}
M.~Merling, G.~Raptis, and J.~Semikina, \emph{Parametrized scissors congruence $K$-theory of manifolds and cobordism categories}, arXiv (2025). \url{https://arxiv.org/abs/2504.01810}

\bibitem{Qui}
D.~Quillen, \emph{Higher algebraic K-theory. I.}, Algebraic K-theory, I: Higher K-theories (Proc. Conf., Battelle Memorial Inst., Seattle, Wash., 1972), pp. 85--147, Lecture Notes in Math. 341, Springer, Berlin 1973. 

\bibitem{Ra}
G.~Raptis, \emph{Higher homotopy categories, higher derivators, and {$K$}-theory}, Forum Math. Sigma \textbf{10} (2022), no. e54, 1--36.


\bibitem{RS}
G.~Raptis and W.~Steimle, \emph{A cobordism model for Waldhausen $K$-theory.} J. London Math. Soc. \textbf{99} (2019), no. 2, 516--534.


\bibitem{Ste} 
G.~Stefanich, \emph{Higher sheaf theory I: Correspondences}, arXiv (2020). arXiv:2011.03027(2020).

\bibitem{Wal}
F.~Waldhausen, \emph{Algebraic K-theory of spaces}, Algebraic and geometric topology (New Brunswick, N.J., 1983), 
pp. 318--419, Lecture Notes in Math. 1126, Springer, Berlin, 1985. 


\end{thebibliography}
\end{document}